\documentclass[11pt]{amsart}
\setlength{\parindent}{0cm}
\setlength{\parskip}{0.087cm}

\usepackage{a4wide}
\usepackage[all,cmtip]{xy}
\usepackage{amssymb}
\usepackage{mathabx}

\usepackage{amscd,amsmath,amssymb,fancyhdr,color}
\usepackage{tikz-cd}
\usepackage{scalerel, mathtools}
\usepackage[backref=page]{hyperref}
\renewcommand*{\backref}[1]{}

\newcommand{\la}{\lambda}
\newcommand{\al}{\alpha}
\newcommand{\be}{\beta}

\newcommand{\ov}{\overline}

\newcommand{\Ker}{\text{Ker}}

\newcommand{\pa}[1]{\ensuremath{\frac{\partial}{\partial #1}}}

\newcommand{\Ka}{K\"ahler}

\newcommand{\CC}{\mathbb{C}}
\newcommand{\HH}{\mathbb{H}}
\newcommand{\RR}{\mathbb{R}}
\newcommand{\ZZ}{\mathbb{Z}}
\newcommand{\NN}{\mathbb{N}}

\newcommand{\Ss}{\mathbb{S}}

\newcommand{\cz}{\widebar{z}}

\newcommand{\e}{\mathrm{e}}

\newcommand{\del}{\partial}

\newcommand{\LL}{\mathcal{L}}
\newcommand{\TT}{\mathbb{T}}
\newcommand{\ce}{\mathcal{C}^\infty}

\numberwithin{equation}{section}

\def\eqref#1{(\ref{#1})}

\newcommand{\C}{{\mathbb C}}
\newcommand{\R}{{\mathbb R}}
\newcommand{\Q}{{\mathbb Q}}
\renewcommand{\H}{{\mathbb H}}

\def\1{\sqrt{-1}\:}

\newcommand{\cntrct}                
{\hspace{2pt}\raisebox{1pt}{\text{$\lrcorner$}}\hspace{2pt}}



\renewcommand{\phi}{\varphi}
\renewcommand{\epsilon}{\varepsilon}
\renewcommand{\geq}{\geqslant}
\renewcommand{\leq}{\leqslant}


\newcommand{\Hom}{\operatorname{Hom}}
\newcommand{\Aut}{\operatorname{Aut}}

\newcommand{\Diff}{\operatorname{Diff}}

\renewcommand{\dim}{\operatorname{dim}}

\renewcommand{\Re}{\operatorname{Re}}
\renewcommand{\Im}{\operatorname{Im}}






\newcounter{Mycounter}[section]
\newcounter{lemma}[section]
\setcounter{lemma}{0}

\newcounter{claim}[section]

\setcounter{claim}{0}

\newcounter{sublemma}[section]

\setcounter{sublemma}{0}

\newcounter{corollary}[section]

\setcounter{corollary}{0}

\newcounter{theorem}[section]

\setcounter{theorem}{0}

\newcounter{conjecture}[section]

\setcounter{conjecture}{0}

\newcounter{proposition}[section]

\setcounter{proposition}{0}

\newcounter{definition}[section]

\setcounter{definition}{0}

\newcounter{example}[section]

\setcounter{example}{0}

\newcounter{remark}[section]

\setcounter{remark}{0}

\newcounter{problem}[section]

\setcounter{problem}{0}

\newcounter{question}[section]

\setcounter{question}{0}

\makeatletter

\@addtoreset{equation}{section}

\@addtoreset{footnote}{section}

\makeatother

\title{De Rham and Twisted Cohomology of Oeljeklaus-Toma manifolds}
\author{Nicolina Istrati}
\address[Nicolina Istrati]{Univ Paris Diderot, Sorbonne Paris Cit\'{e}
Institut de Math\'{e}matiques de Jussieu-Paris Rive Gauche 
Case 7012, 75205 Paris Cedex 13, France}
\email{nicolina.istrati@imj-prg.fr}
\author{Alexandra Otiman} 
\thanks{A. O. is partially supported by a grant of Ministry of Research and Innovation, CNCS - UEFISCDI,
project number PN-III-P4-ID-PCE-2016-0065, within PNCDI III}

\address[Alexandra Otiman]{
Institute of Mathematics "Simion Stoilow" of the Romanian Academy, 21,
Calea Grivitei Street, 010702, Bucharest, Romania}
\address{Max Planck Institut f\"ur Mathematik, Vivatsgasse 7, 53111 Bonn, Germany}
\address{University of Bucharest, Research Center in Geometry, Topology and Algebra, Faculty of Mathematics and Computer Science, 14 Academiei Str., Bucharest, Romania}
\email{aotiman@mpim-bonn.mpg.de, alexandra\_otiman@yahoo.com}

\keywords{OT manifold, de Rham cohomology, twisted cohomology, spectral sequence, number field, LCK metric}

\subjclass[2010]{53C55,  58A12, 55R20, 11R27}

\begin{document}
\begin{abstract} Oeljeklaus-Toma (OT) manifolds are complex non-K\"ahler manifolds whose construction arises from specific number fields. In this note, we compute their de Rham cohomology in terms of invariants associated to the background number field. This is done by two distinct approaches, one by averaging over a certain compact group, and the other one using the Leray-Serre spectral sequence. In addition, we compute also their  twisted  cohomology. As an application, we show that the low degree Chern classes of any complex vector bundle on an OT manifold vanish in the real cohomology. Other applications concern the OT manifolds admitting locally conformally \Ka\ (LCK) metrics: we show that there is only one possible Lee class of an LCK metric, and we determine all the possible  twisted  classes of an LCK metric, which implies the nondegeneracy of certain Lefschetz maps in cohomology.  
\end{abstract}
\maketitle
\section{Introduction}

Oeljeklaus-Toma manifolds, introduced by K. Oeljeklaus and M. Toma in \cite{ot}, are compact complex non-\Ka\ manifolds which are higher dimension analogues of Inoue surfaces of type $\mathcal{S}^0$ (see \cite{i}). They are quotients of $\H^s\times\C^t$ by discrete groups of affine transformations arising from a number field $K$ and a particular choice of a subgroup of units $U$ of $K$. They are commonly referred to as OT manifolds of type $(s, t)$, and denoted by $X(K,U)$. These manifolds have been of particular interest for locally conformally K\"ahler (LCK) geometry. When they were introduced, OT manifolds of type $(s, 1)$ were shown to carry LCK metrics and they constituted the first examples of manifolds to disprove a conjecture of Vaisman, according to which the odd index Betti numbers of an LCK manifold should be odd. As a matter of fact, in higher dimension, all other explicit examples of LCK manifolds that are known in the literature admit an \textit{LCK metric with potential} (cf. \cite{ov10}), which makes them share similar properties with the Hopf manifolds. For this reason, understanding OT manifolds, which do not admit such metrics, should give more insight into LCK geometry. 

So far, significant advances have been made in the study of OT manifolds. Many of their properties are closely related to the arithmetical properties of $(K,U)$, as can be seen particularly in the papers of M. Parton and V. Vuletescu \cite{pv} and of O. Braunling \cite{b}. OT manifolds were shown to carry the structure of a solvmanifold by H. Kasuya \cite{k}, and those of type $(s,1)$ to contain no non-trivial complex submanifolds by L. Ornea and M. Verbitsky \cite{ov11}. A delicate issue seems to be the existence of LCK metrics on OT manifolds which are not of type $(s,1)$. Some progress in this direction has been made by V. Vuletescu \cite{vv} and A. Dubickas \cite{d}, but the question remains open in general. 

Concerning the cohomology of OT manifolds, their first Betti number and the second one for a certain subclass of manifolds, called \textit{of simple type}, were computed in \cite{ot}. More recently, H. Kasuya computed in \cite{k2} the de Rham cohomology of OT manifolds of type $(s,1)$, using their solvmanifold structure. In this note, we use different methods to compute the de Rham cohomology algebra as well as the  twisted  cohomology of any OT manifold $X(K,U)$. This is done in terms of numerical invariants coming from $U\subset K$, and the exact statements are \ref{teoremaA} and \ref{teoremaB}. \ref{teoremaA} is proved by two different approaches, one by reducing to the invariant cohomology with respect to a certain compact Lie group, in Section 3, and the other one using the Leray-Serre spectral sequence, in Section 4. This last approach is also used to prove \ref{teoremaB} in Section 5.

The last section is devoted to a few applications, focusing on the OT manifolds which admit an LCK metric. We compute the explicit cohomology for OT manifolds admitting LCK metrics (\ref{LCKbun}, \ref{twistedLCK}) and for OT manifolds associated to a certain family of polynomials (\ref{exemplu}). Also, we show that the set of possible Lee classes for an LCK metric on $X(K, U)$ consists of only one element (\ref{unic}). The problem of characterizing the set of Lee classes was first considered by Tsukada in \cite{ts}, who gave a description for Lee classes of LCK metrics on Vaisman manifolds and then by Apostolov and Dloussky in \cite{ad}, where they characterize this set for Hopf surfaces, Inoue surfaces $\mathcal{S}^{\pm}$ and Kato surfaces. In \cite{alex}, it is described also for Inoue surfaces of type $\mathcal{S}^0$ and finally, OT manifolds complete the list of known LCK manifolds for which this set is known. Additionally, we determine all the possible  twisted  classes of LCK forms on OT manifolds (\ref{LCKclass}), generalizing a result of \cite{alex} showing that this class cannot vanish. They all turn out to induce a non-degenerate Lefschetz map in cohomology. A final application (\ref{Chern}) concerns the vanishing of certain real Chern classes of vector bundles on OT manifolds.

\section{Preliminaries}

\subsection{Oeljeklaus-Toma manifolds}

We briefly recall the construction of Oeljeklaus-Toma manifolds, following \cite{ot}, and some of their properties that we will need.

Given two positive numbers $s,t>0$, an OT manifold $X$ of type $(s,t)$ is a compact quotient of $\tilde X:=\HH^s\times\CC^t$ by a discrete group $\Gamma$ of rank $2(s+t)$ arising from a number field. More specifically, let $m=s+t$ and $n=s+2t$ and let $K$ be a number field with $n$ embeddings in $\C$, $s$ of them real and $2t$ complex conjugated. We shall denote  these embeddings by $\sigma_1, \ldots, \sigma_{n}$, with the convention that the first $s$ are real and $\sigma_{s+t+i}=\overline{\sigma_{s+i}}$, for any $1\leq i\leq t$. The ring of integers of $K$, $O_K$, which as a $\ZZ$-module is free of rank $n$, acts on $\HH^s\times\CC^t$ via the first $m$ embeddings. If $(w,z)=(w_1,\ldots, w_s, z_1, \ldots, z_t)$ denote the holomorphic coordinates on $\HH^s\times\CC^t$, the action is given by translations:
\begin{equation*}
T_a(w,z)= (w_1 + \sigma_1(a), \ldots, w_s+\sigma_s(a), z_1 + \sigma_{s+1}(a), \ldots z_t + \sigma_{s+t}(a)),\ \ \ a\in O_K.
\end{equation*}
It is a free and proper action, and as a smooth manifold,  the quotient is given by:
 \begin{equation*}
 \hat X:=\HH^s\times \CC^t/O_K\cong(\RR_{>0})^s\times \TT^{n}.
 \end{equation*}
 Next, one defines inside the group of units $O_K^*$ the subgroup of positive units $O_K^{*, +}$ as:
\begin{equation*}
O_K^{*, +} =\{u \in O_K^* \mid \sigma_i(u) > 0, 1\leq i\leq s\}.
\end{equation*}
This group acts on $\HH^s\times \CC^t$ by dilatations as:
\begin{equation*}
R_u(w, z)=(\sigma_1(u)w_1, \ldots, \sigma_s(u)w_s, \sigma_{s+1}(u)z_1, \ldots, \sigma_{s+t}(u)z_t), \ \ \ u\in O_K^{*,+}.
\end{equation*}
This action is free, but not properly discontinuous. However, as shown in \cite{ot}, one can choose a rank $s$ subgroup $U$ in $O_K^{*,+}$ which embeds as a lattice in $(\RR_{>0})^s$ via:
\begin{align*}
j:U&\rightarrow (\RR_{>0})^s\\
u&\mapsto (\sigma_1(u),\ldots,\sigma_s(u)).
\end{align*}
We will denote by $U_\HH\cong U$ the lattice $j(U)$. In particular, $U$ acts properly discontinuously on $\HH^s\times\CC^t$. Clearly, $U$ also acts on $O_K$, so that one gets a free, properly discontinuous action of the semi-direct product $\Gamma:=U\rtimes O_K$ on $\HH^s\times \CC^t$. The quotient of this action $X:=\HH^s\times \CC^t/\Gamma$, denoted by $X(K,U)$, is the Oeljeklaus-Toma manifold of type $(s,t)$ associated to $K$ and $U$.

Since the action of $\Gamma$ on $\tilde X$ is holomorphic, $X$ is a complex manifold. Moreover, it is compact, because it has in fact the structure of a torus fiber bundle over another torus:
\begin{equation}\label{fiberbundle}
{\mathbb{T}}^{n}\rightarrow X(K, U) \xrightarrow{\pi} {\mathbb{T}}^s.
\end{equation}
Indeed, this last assertion can be seen as follows: the natural projection 
\begin{equation*}
\hat \pi:\hat X=(\RR_{>0})^s\times \TT^n\rightarrow (\RR_{>0})^s
\end{equation*} 
is a trivial $\TT^n$-fiber bundle over $(\RR_{>0})^s$. The group $U$ acts on $\hat X$, but also on $(\RR_{>0})^s$ by translations via $j$, and $\hat \pi$ is equivariant for this action. As $X=\hat X/U$ and $\TT^s=(\RR_{>0})^s/U$, $\hat\pi$ descends to the $\TT^n$-fiber bundle \eqref{fiberbundle}.

For later use, it is important to note that $\pi$ is a flat fiber bundle, meaning that it has locally constant transition functions. This is equivalent to saying that it is given by a representation $R:\pi_1(\TT^s)=U_\HH\rightarrow \Diff(\TT^n)$, $u\mapsto \Phi_u$. We can make $R$ explicit, after identifying $U_\HH$ with $U$.   Recalling that $\TT^n=\RR^s\times\CC^t/O_K$ and denoting by $r=(r_1,\ldots, r_s)$ the real coordinates on $\RR^s$, for any $u\in U$, $\Phi_u$ is given by: 
\begin{equation}\label{reprezentare}
\Phi_u((r,z) \mod O_K)=(\sigma_1(u)r_1,\ldots, \sigma_s(u)r_s,\sigma_{s+1}(u)z_1,\ldots,\sigma_m(u)z_t))\mod O_K
\end{equation}
which is clearly well defined, since $u \cdot O_K = O_K$, for all $u \in U$.

The tangent bundle of $\hat X$ splits smoothly as  $T\hat X=E\oplus V$, where $E$ is the pullback of $T(\RR_{>0})^s$ and $V$ is the pullback of $T\TT^n$ on $\hat X$ by the natural projections. 
These bundles are trivial, and for later use we will need to fix a global frame of $V^*\otimes\CC$ over $\hat X$. If $z_1, \ldots, z_t$ denote the holomorphic coordinates on $\CC^t$ and $w_1=r_1+iv_1, \ldots, w_s=r_s+iv_s$ are holomorphic coordinates on $\HH^s$, also viewed as local coordinates on $\hat X$, we choose as a basis of $\ce(\hat X, V^*\otimes \CC)$ over $\ce(\hat X,\CC)$: 
\begin{align*}
e_j=dr_j \text{ for } 1\leq j\leq s, e_{s+j}=dz_j \text{ and } e_{s+t+j}=d\cz_j \text{ for } 1\leq j\leq t.  
\end{align*}
In particular, for any $0\leq l\leq n$, a frame for $\bigwedge^l V^*\otimes \CC$ is given by: 
\begin{equation}\label{frame}
\{e_I=e_{i_1}\wedge\ldots\wedge e_{i_l}|I=(0<i_1<\ldots<i_l\leq n)\}.
\end{equation} 
For any multi-index $I=(0<i_1<\ldots<i_l\leq n)$, let us denote by $\sigma_I:U\rightarrow \CC^*$ the representation:
\begin{equation}\label{repr}
\sigma_I(u)=\sigma_{i_1}(u)\cdots\sigma_{i_l}(u).
\end{equation}
Then an element $u\in U$ acts on $e_I$ by $u^*e_I=\sigma_I(u)e_I$.

\subsection{Leray-Serre spectral sequence of a locally trivial fibration}\label{leray}

In this section, we review the general properties of the Leray-Serre spectral sequence associated to a fiber bundle. For a thorough presentation of spectral sequences and the Leray-Serre sequence we refer to \cite{gh} and \cite{botttu}.
Let $F \rightarrow X \xrightarrow{\pi} B$ be a locally trivial fibration. For a trivializing open set $U\subset B$ for $\pi$, we denote by $\phi_U$ an isomorphism $\phi_U: \pi^{-1}(U) \rightarrow U \times F$, and for two trivialising open sets $U,V$, we denote by $g_{UV} = \phi_U \circ \phi_V^{-1}$ the corresponding transition function. Let us also denote by $\mathcal{X}^v$ the sheaf of vertical vector fields on $X$, i.e. the vector fields tangent to the fibers of $\pi$. 

If $\Omega^k$ is the sheaf of $\CC$-valued smooth $k$-forms on $X$, we have the de Rham complex of $X$: 

\begin{equation*}
K^\bullet: \qquad \qquad \ldots \xrightarrow{d} \Omega^k(X) \xrightarrow{d} \Omega^{k+1}(X) \xrightarrow{d} \ldots
\end{equation*}
which is endowed with the following descending filtration:
\begin{equation}\label{filtrarea}
F^pK^{p+q}:= \{\omega \in \Omega^{p+q}(X) \mid \iota_{X_{q+1}}\ldots\iota_{X_1}\omega =0, \forall X_1, \ldots, X_{q+1} \in \mathcal{X}^v(X)\}.
\end{equation}
By the theory of spectral sequences, this filtration determines a sequence of double complexes $(E_r^{\bullet,\bullet},d_r)_{r\geq0}$ with $d_r:E_r^{p,q}\rightarrow E^{p+r,q-r+1}_r$ of bidegree $(r,-r+1)$, which computes the cohomology of the complex $(K^\bullet,d)$. More precisely, if we denote by $E^{p,q}_\infty:=\lim_{r\to\infty}E_r^{p,q}$, we have:
\begin{equation*}
H^k(X,\CC):=H^k(K^\bullet)=\oplus_{p+q=k}E^{p,q}_\infty \ \ \ 0\leq k\leq \dim_\RR X.
\end{equation*}

The complex $E_{r+1}$, called the $(r+1)$-th page of $E$, is defined recurrently as the cohomology of $(E_r,d_r)$.
We now make explicit the definition of each page of the spectral sequence. The  pages $E_0$ and $E_1$ are simply given by:
\begin{align*}
E_0^{p, q} = \frac{F^pK^{p+q}}{F^{p+1}K^{p+q}}, \ \ \ &d_0 : E_0^{p, q} \rightarrow E_0^{p, q+1}\\
&d_0(\hat{\eta}_{p, q}) = \widehat{d\eta}_{p, q+1}\\
E_1^{p, q} = \frac{\Ker \, d^{p, q}_0}{\Im \, d^{p, q-1}_0}, \ \ \ &d_1: E^{p, q}_1 \rightarrow E^{p+1, q}_1\\
&d_1([\widehat{\eta}]^{p, q}_{d_0})=[\widehat{d\eta}]^{p+1, q}_{d_0}.
\end{align*}
The second page is again:
\begin{equation*}
E_2^{p, q} = \frac{\Ker \, d^{p, q}_1}{\Im \, d_1^{p-1, q}}, \ \ \ d_2 : E_2^{p, q} \rightarrow E_2^{p+2, q-1}.\end{equation*}
In order to write down $d_2$, one needs now to make sense of the objects of $E_2$. If $[\hat{\eta}]^{p, q}_{d_0}\in \Ker \ d_1^{p,q}$, then there exists $\hat{\xi} \in E_0^{p+1, q-1}$ such that $\widehat{d\eta} = d_0\hat{\xi} =\widehat{d\xi}$, hence $\widehat{d\eta-d\xi}=0$, meaning that $d\eta-d\xi \in F^{p+2}K^{p+q+1}$. Then:
\begin{equation}\label{D2}
d_2([[\hat{\eta}]_{d_0}]_{d_1}) = [[\widehat{d\eta-d\xi}]_{d_0}]_{d_1}.
\end{equation}
In general, by induction, one can show that $d_r:E_r^{p, q} \rightarrow E_r^{p+r, q-r+1}$ is given by:
\begin{equation}\label{diffr}
d_r([\ldots[[\hat{\eta}]_{d_0}]_{d_1}\ldots]_{d_{r-1}})=[\ldots[[\widehat{d\eta - d\delta}]_{d_0}]_{d_1}\ldots]_{d_{r-1}},
\end{equation}
where $\delta = \xi_1+\ldots+\xi_{r-1}$, and the elements $\xi_1 \in F^{p+1}K^{p+q}, \ldots, \xi_{r-1} \in F^{p+r-1}K^{p+q}$ are chosen via diagram chasing  such that $d\eta - d\delta \in F^{p+r}K^{p+q+1}$. 

When the fiber bundle $\pi$ is flat, one has a $\ce$ splitting of $TX=T_B\oplus T_F$, where, locally, $T_B$ is the tangent space of $B$ and $T_F$, of the fiber. The differential $d$ also splits as $d_B+d_F$ with $d_B^2=0$ and $d_F^2=0$, where $d_B$ is the derivation in the direction of the basis and $d_F$ is the derivation along the fiber. In this case, the spectral sequence becomes more explicit. We have an induced splitting of the vector bundle of $k$-forms:
\begin{equation*}
\textstyle\bigwedge^kT^*X=\oplus_{p+q=k}\textstyle\bigwedge^p T_B^*\otimes\textstyle\bigwedge^q T_F^*
\end{equation*}
and then $E_0^{p,q}=\ce(X,\bigwedge^pT_B^*\otimes\bigwedge^q T_F^*)$ for any $0 \leq p, q \leq k$. Moreover, one has:
\begin{align*}
d_0=d_F,  \ \  d_1([\al]_{d_F})=[d_B\al]_{d_F}.
\end{align*}
For $a\in\Ker\ d_1^{p,q}$ represented by $\eta\in\Ker\ d_F\subset E_0^{p,q}$, there exists $\xi\in E_0^{p+1,q-1}$ so that $d_B\eta=d_F\xi$. One then has: 
\begin{equation*}
d\eta-d\xi=d_F\eta+d_B\eta-d_F\xi-d_B\xi=-d_B\xi\in\ker d_F\subset E_0^{p+2,q-1}
\end{equation*}
so that, by \eqref{D2}, $d_2$ is given by:
\begin{equation*}
d_2([[\eta]_{d_F}]_{d_B})=-[[d_B\xi]_{d_F}]_{d_B}.
\end{equation*}
In general, for a given element $[\ldots[[\eta]_{d_F}]_{d_B}\ldots]_{d_{r-1}}\in E_r^{p,q}$, \eqref{diffr} tells us that: 
\begin{equation*}
d_r([\ldots[[\eta]_{d_F}]_{d_B}\ldots]_{d_{r-1}}) = - [\ldots[[d_B \xi_{r-1}]_{d_F}]\ldots]_{d_{r-1}}
\end{equation*}
for some $\xi_{r-1} \in E_{0}^{p+r-1, q-r+1}$ such that  there exist $\xi_1\in E_{0}^{p+1, q-1}, \ldots, \xi_{r-2}\in E_{0}^{p+r-2, q-r+2}$ that satisfy $d\eta-d\xi_1-\ldots d\xi_{r-1} \in E_0^{p+r, q-r+1}$, obtained by chasing diagrams.

\subsection{ Twisted  cohomology}
Let $M$ be a compact differentiable manifold, let $\theta$ be a complex valued closed one-form on $M$ and let $d_\theta$ be the differential operator $d_\theta=d-\theta\wedge\cdot$. Since $d_\theta^2=0$, we have a complex:
\begin{equation*}
\xymatrix{
0\ar[r] & \Omega^0_{M}(M)\ar[r]^-{d_\theta} & \Omega^1_{M}(M) \ar[r]^-{d_\theta}&\ldots
}
\end{equation*}
whose cohomology $H_\theta^\bullet(M):=\frac{\Ker d_\theta}{\Im d_\theta}$ is called the \textit{twisted} cohomology associated to $[\theta]_{dR}$. Indeed, it depends only on the de Rham cohomology class of $\theta$. If $M$ is orientable, there is a version of Poincar\' e duality that holds for $H^\bullet_\theta(M)$, see for instance \cite[Corollary~3.3.12]{dimca}, and that is: $H^\bullet_\theta(M)^* \cong H^{N-\bullet}_{-\theta}(M)$, where $N=\mathrm{dim}_{\R}M$. Moreover, we have the following result:

\begin{lemma}\label{h0}
Let $\tau\in H^1(M,\CC)$ be a de Rham class. Then the following are equivalent:
\begin{enumerate}
\item $H^0_\tau(M)\neq 0$;
\item $\tau\in H^1(M,2\pi i\ZZ)\subset H^1(M,\CC)$;
\item For any $k\in\ZZ$, $H^k_\tau(M,\CC)\cong H^k_{dR}(M,\CC)$.
\end{enumerate}
\end{lemma}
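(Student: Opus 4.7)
The scheme is to prove $(1)\Rightarrow(2)\Rightarrow(3)\Rightarrow(1)$, and since $(3)\Rightarrow(1)$ is immediate from $H^0_{dR}(M,\CC)\cong\CC\neq 0$ (as $M$ is connected, which we tacitly assume --- if not, we argue component by component), the work is in the first two implications.

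For $(1)\Rightarrow(2)$: pick a nonzero $f\in H^0_\tau(M)$, so $df=\tau f$. The first step is to show $f$ is nowhere vanishing. I would argue this by fixing a point $p_0$ and a smooth path $\gamma:[0,1]\to M$ from $p_0$ to an arbitrary $q$; then $g(s):=f(\gamma(s))$ satisfies the linear ODE $g'(s)=\langle\tau,\gamma'(s)\rangle g(s)$, so by uniqueness either $g\equiv 0$ on $[0,1]$ or $g$ never vanishes. Connectedness of $M$ forces the first alternative to contradict $f\not\equiv 0$, hence $f(q)\neq 0$. Once $f$ is nowhere zero, the local $1$-form $d\log f$ is globally well-defined and equals $\tau$. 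For any loop $\gamma$ based at $p_0$, the identity $f(\gamma(1))=f(p_0)\exp\!\bigl(\int_\gamma \tau\bigr)$ and the single-valuedness of $f$ yield $\int_\gamma\tau\in 2\pi i\ZZ$, which is precisely statement (2).

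For $(2)\Rightarrow(3)$: I would first produce a nowhere-vanishing smooth function $f:M\to\CC^*$ with $df=\tau f$. Writing $\tau=2\pi i\eta+d\phi$ with $\eta$ a closed form having integer periods and $\phi$ a smooth function, the function
\begin{equation*}
f(x):=\exp\!\Bigl(\int_{x_0}^{x}\tau\Bigr)
\end{equation*}
is well defined, since all periods of $\tau$ lie in $2\pi i\ZZ$ and are killed by the exponential. With such $f$ in hand, define the map of complexes $\Phi:(\Omega^\bullet(M),d_\tau)\to(\Omega^\bullet(M),d)$ by $\Phi(\alpha)=f^{-1}\alpha$. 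A direct computation gives
\begin{equation*}
d\bigl(f^{-1}\alpha\bigr)=-f^{-2}\,df\wedge\alpha+f^{-1}d\alpha=f^{-1}(d\alpha-\tau\wedge\alpha)=\Phi(d_\tau\alpha),
\end{equation*}
so $\Phi$ is an isomorphism of chain complexes (with inverse multiplication by $f$), inducing isomorphisms $H^k_\tau(M,\CC)\cong H^k_{dR}(M,\CC)$ in every degree.

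The main subtlety is the nowhere-vanishing claim in $(1)\Rightarrow(2)$: this is what forces the equation $df=\tau f$ to upgrade from a cohomological relation to the monodromy statement $[\tau]\in 2\pi i H^1(M,\ZZ)$, and it rests on the ODE-uniqueness argument combined with the connectedness of $M$. Everything else is essentially bookkeeping, once the correct integrating factor $f$ has been identified.
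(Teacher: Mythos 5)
Your proof is correct and follows essentially the same approach as the paper's: show that the twisted-harmonic function is nowhere vanishing via linear ODE uniqueness, then use it as an integrating factor to conjugate $d_\tau$ to $d$. The only difference is cosmetic --- you run the ODE argument directly for the complex function along paths in $M$ and read off the monodromy $\int_\gamma\theta\in 2\pi i\ZZ$ from loops, whereas the paper establishes the nowhere-vanishing via the real equation $d|h|^2=2|h|^2\Re\theta$ and then passes to the universal cover to write $h=c\,\e^f$.
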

\begin{proof}
Clearly, if $(3)$ holds then $H^0_\tau(M,\CC)=H^0(M,\CC)\neq 0$, so we have $(1)$.

Now suppose $(1)$ holds, meaning that if we choose a representative $\theta\in\tau$, there exists a smooth function $h:M\rightarrow \CC$ so that $h\theta=dh$, with $h$ not identically zero. Then we also have $\ov h\ov{\theta}=d\ov h$, which implies $d|h|^2=|h|^22\Re\theta$. This is a linear first order differential system, so if $|h|^2$ has some zero, then $h$ would vanish everywhere on $M$. Thus, we have $2\Re\theta=d\ln|h|^{2}$, and without any loss of generality, we can now suppose that $\Re\theta=0$.

On the universal cover $\tilde M$, there exists $f\in\ce(\tilde M,\CC)$ so that $\theta=df$. Then we find:
\begin{equation*} 
d(\e^{-f}h)=\e^{-f}(-dfh+dh)=0
\end{equation*}
thus $h=c\e^f$, with $c\in\CC$ a constant. On the other hand, by the universal coefficient theorem and Hurewicz theorem we have $H^1(M,\CC)\cong \Hom(\pi_1(M),\CC)$, and the homomorphism $\tau:\pi_1(M)\rightarrow \CC$ correspondig to $\theta$ is precisely given by $\tau(\gamma)=\gamma^*f-f$, $\gamma\in\pi_1(M)$. Thus, as $\e^f=c^{-1}h$ is defined on $M$, it is $\pi_1(M)$-invariant as a function on $\tilde M$, so that we have
\begin{equation*}
\gamma^*\e^f=\e^f\e^{\tau(\gamma)}=\e^f, \ \ \forall\gamma\in\pi_1(M)
\end{equation*}
implying that $\Im\tau\subset 2\pi i\ZZ$, from which assertion $(2)$ follows.

Similarly, if $(2)$ holds and we choose $\theta$ a representative of $\tau$ and write $\theta=df$ on $\tilde M$, then as $\tau(\gamma)=\gamma^*f-f\in 2\pi i\ZZ$ for any $\gamma\in\pi_1(M)$, the function $h=\e^f$ is $\pi_1(M)$-invariant and descends to a well-defined function $h:M\rightarrow \CC^*$ satisfying $dh=h\theta$. Finally, let us note that in this case $d_\theta(\cdot)=hd(h^{-1}\cdot)$, which establishes an isomorphism between the twisted cohomology $H^\bullet_\tau(M)$ and $H^\bullet(M,\CC)$.
\end{proof}

\begin{remark}
A result of \cite{llmp} states that if $\theta\in\Omega_M^{1}(M,\RR)$ is a non-zero closed form, and there exists a Riemannian metric on $M$ so that $\theta$ is parallel for the corresponding Levi-Civita connection, then we have $H^\bullet_\theta(M)=0$. Note that this is not true if $\theta$ is complex valued.
\end{remark}

The twisted cohomology can also be seen as the cohomology of certain flat line bundles. In general, these are parametrized by elements $\rho\in\Hom(\pi_1(M),\CC^*)$ as follows: we let $L_\rho$ be the induced complex line bundle over $M$, that is the quotient of $\tilde M\times\CC$ by the action of $\pi_1(M)$ given by: 
\begin{equation*}
\gamma(x,\la)=(\gamma(x),\rho(\gamma)\la), \ \ \ \gamma\in \pi_1(M), \ \ (x,\la)\in\tilde M\times\CC.
\end{equation*}
Moreover, we endow $L_\rho$ with the unique flat connection $\nabla$ whose corresponding parallel sections are exactly the locally constant sections of $L_\rho$. Denote by $d^\nabla$ the differential operator acting on $\Omega_M^\bullet\otimes L_\tau$ which is induced by $\nabla$ by the Leibniz rule. Then the cohomology of ($L_\rho ,\nabla)$ is the cohomology denoted by $H^\bullet(M, L_\rho )$ of the complex:
\begin{equation*}
\xymatrix{
0\ar[r] & \Omega^0_{M}(M,L_{\rho }) \ar[r]^-{d^\nabla} &\Omega^1_{M}(M, L_{\rho }) \ar[r]^-{d^\nabla} &\ldots.
}
\end{equation*}

Equivalently, if we let $\mathcal{L}_\rho$ be the sheaf of parallel sections of $(L_\rho,\nabla)$, then we also have a natural isomorphism $H^\bullet(M, L_\rho)\cong H^\bullet(M,\mathcal{L}_\rho)$, where the latter is the sheaf cohomology. $\mathcal{L_\rho}$ is called a local system, and determines and is completely determined by $(L_\rho,\nabla)$.

On the other hand, the exponential induces an exact sequence:
\begin{equation}\label{exp}
 \xymatrix{
 0\ar[r] &H^1(M,2\pi i\ZZ)\ar[r] & H^1(M,\CC) \ar[r]^-{\exp} & H^1(M,\CC^*)\ar[r]^-{c_1} & H^2(M,\ZZ)}
\end{equation}
and all elements $\rho\in\ker c_1\subset H^1(M,\CC^*)\cong\Hom(\pi_1(M),\CC^*)$ are of the form $\exp\tau$, with $\tau\in\Hom(\pi_1(M),\CC)$. For the corresponding flat line bundle $(L_\rho,\nabla)$, the connection has an explicit form. We choose $\theta\in\tau$ a representative, write $\theta=d\phi$ on $\tilde M$, so that $s=\e^\phi$ determines a global trivialising section of $L_\rho$. Then $\nabla$ is given by $\nabla s=\theta\otimes s$, and it can be easily seen that this construction does not depend on the chosen $\tau\in\exp^{-1}(\rho)$, nor on $\theta\in\tau$. Moreover, we have a natural isomorphism:
 \begin{equation*}
H_\theta^\bullet(M)\cong H^\bullet(M,L_\rho^*).
\end{equation*}
Also note that if $H^2(M,\ZZ)$, or also $H_1(M,\ZZ)$, has no torsion, then the map $c_1$ in \eqref{exp} is zero, and then all flat line bundles on $M$ are of this form.

In particular, by \ref{h0} we have the following result:

\begin{lemma}\label{S1} $H^\bullet_\tau(\Ss^1, \CC)=0$ if and only if $\tau \notin H^1(\Ss^1, 2\pi i\ZZ)$.
\end{lemma}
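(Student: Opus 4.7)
The plan is to leverage Lemma \ref{h0} together with the Poincar\'e duality isomorphism for twisted cohomology recalled just above that lemma. Since $\mathbb{S}^1$ is one-dimensional, the only potentially nonzero twisted cohomology groups are $H^0_\tau$ and $H^1_\tau$, so the statement reduces to checking that both vanish if and only if $\tau \notin H^1(\mathbb{S}^1, 2\pi i\mathbb{Z})$.

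One direction is essentially free. If $H^\bullet_\tau(\mathbb{S}^1,\mathbb{C}) = 0$, then in particular $H^0_\tau(\mathbb{S}^1) = 0$, so the implication $(1)\Rightarrow(2)$ of Lemma \ref{h0} applied to $M=\mathbb{S}^1$ gives $\tau \notin H^1(\mathbb{S}^1, 2\pi i\mathbb{Z})$. Conversely, assume $\tau \notin H^1(\mathbb{S}^1, 2\pi i\mathbb{Z})$. The contrapositive of the implication $(2)\Rightarrow(1)$ in Lemma \ref{h0} immediately yields $H^0_\tau(\mathbb{S}^1) = 0$. For the top-degree piece, I would invoke the Poincar\'e duality isomorphism $H^1_\tau(\mathbb{S}^1)^* \cong H^0_{-\tau}(\mathbb{S}^1)$ stated in the preamble to Lemma \ref{h0}. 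Since $2\pi i\mathbb{Z}$ is closed under negation, the class $-\tau$ also lies outside $H^1(\mathbb{S}^1, 2\pi i\mathbb{Z})$, and a second application of Lemma \ref{h0} forces $H^0_{-\tau}(\mathbb{S}^1) = 0$, hence $H^1_\tau(\mathbb{S}^1) = 0$. All higher $H^k_\tau$ vanish trivially as $\dim_{\mathbb{R}}\mathbb{S}^1 = 1$.

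There is no real obstacle here: the whole argument is just a two-line bootstrapping from Lemma \ref{h0} via Poincar\'e duality, using the symmetry of $2\pi i\mathbb{Z}$ under $\tau \mapsto -\tau$. If one preferred a self-contained check avoiding Poincar\'e duality, one could pick a representative $\theta = \lambda\, d\alpha$ with $\lambda \in \mathbb{C}$ on $\mathbb{S}^1 = \mathbb{R}/2\pi\mathbb{Z}$ and solve $f'-\lambda f = g$ explicitly by variation of constants, observing that the obstruction to finding a $2\pi$-periodic primitive is the factor $1-e^{2\pi\lambda}$, which is nonzero precisely when $\lambda\notin i\mathbb{Z}$, i.e.\ when $[\theta]\notin H^1(\mathbb{S}^1,2\pi i\mathbb{Z})$.
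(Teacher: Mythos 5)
Your argument is correct and matches the paper's proof verbatim in structure: Lemma \ref{h0} handles $H^0$, Poincar\'e duality plus the symmetry of $2\pi i\ZZ$ under negation handles $H^1$. The only cosmetic slip is that you have swapped the labels of the two implications of Lemma \ref{h0} you invoke (the contrapositive of $(2)\Rightarrow(1)$ gives $H^0_\tau=0\Rightarrow\tau\notin H^1(\Ss^1,2\pi i\ZZ)$, while the contrapositive of $(1)\Rightarrow(2)$ gives the converse), but since the lemma asserts a full equivalence this does not affect the validity.
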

\begin{proof}
The only if part is assured by \ref{h0}. On the other hand, if $\tau\notin H^1(\Ss^1,2\pi i\ZZ)$, then also $-\tau\notin H^1(\Ss^1,2\pi i\ZZ)$, thus \ref{h0} implies $H^0_\tau(\Ss^1)=H^0_{-\tau}(\Ss^1)=0$. Finally, by Poincar\'e duality we find $H^1_{\tau}(\Ss^1)\cong H^0_{-\tau}(\Ss^1)^*=0$, which concludes the proof. \end{proof}

This allows us to prove the following, which we will use a number of times in the sequel:
 
\begin{lemma}\label{isom}
Let $\TT^s$ be the compact $s$-dimensional torus, let $\rho:\pi_1(\TT^s)\rightarrow \CC^*$ be any representation of $\pi_1(\TT^s)$ on $\CC$ and let $(L_\rho,\nabla)\rightarrow \TT^s$ be the associated flat complex line bundle. Then $H^\bullet(\TT^s,L_\rho)=0$ if and only if $\rho$ is not trivial. 
\end{lemma}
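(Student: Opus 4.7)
The ``only if'' direction is trivial: if $\rho$ is the trivial representation, then $(L_\rho,\nabla)$ is canonically isomorphic to the trivial flat line bundle, so $H^\bullet(\TT^s,L_\rho)\cong H^\bullet(\TT^s,\CC)\neq 0$. The real content is the converse.

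My plan is to reduce to the one-dimensional case treated in \ref{S1} via a K\"unneth-type decomposition. Write $\TT^s=\Ss^1_1\times\cdots\times\Ss^1_s$ and let $\gamma_1,\ldots,\gamma_s$ be the standard generators of $\pi_1(\TT^s)=\ZZ^s$. Setting $\lambda_i=\rho(\gamma_i)\in\CC^*$, the representation $\rho$ factors as the external tensor product of the one-dimensional representations $\rho_i:\pi_1(\Ss^1_i)\to\CC^*$ sending the generator to $\lambda_i$. Accordingly, the associated flat line bundle splits as an external tensor product $L_\rho\cong L_{\rho_1}\boxtimes\cdots\boxtimes L_{\rho_s}$, with the flat connection $\nabla$ corresponding to $\nabla_1\boxtimes\cdots\boxtimes\nabla_s$. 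Since every $\rho_i:\ZZ\to\CC^*$ lifts through $\exp$ to some $\tau_i\in\Hom(\pi_1(\Ss^1_i),\CC)$, each factor fits into the framework of twisted cohomology as discussed after \eqref{exp}.

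Next, I would invoke the K\"unneth formula for cohomology with flat $\CC$-coefficients: the twisted de Rham complex $(\Omega^\bullet(\TT^s,L_\rho),d^\nabla)$ is naturally isomorphic to the tensor product of the complexes $(\Omega^\bullet(\Ss^1_i,L_{\rho_i}),d^{\nabla_i})$, and since we work over $\CC$ there are no Tor contributions, so
\begin{equation*}
H^\bullet(\TT^s,L_\rho)\;\cong\;\bigotimes_{i=1}^s H^\bullet(\Ss^1_i,L_{\rho_i}).
\end{equation*}
If $\rho$ is non-trivial, at least one $\lambda_{i_0}\neq 1$, i.e.\ $\rho_{i_0}$ is non-trivial. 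Lifting $\rho_{i_0}=\exp\tau_{i_0}$, non-triviality of $\rho_{i_0}$ is equivalent to $\tau_{i_0}\notin H^1(\Ss^1_{i_0},2\pi i\ZZ)$, so \ref{S1} combined with the identification $H^\bullet_{\tau_{i_0}}(\Ss^1_{i_0})\cong H^\bullet(\Ss^1_{i_0},L_{\rho_{i_0}}^*)$ (and the fact that $L$ is trivial iff $L^*$ is trivial) yields $H^\bullet(\Ss^1_{i_0},L_{\rho_{i_0}})=0$. Hence one factor of the tensor product vanishes, and $H^\bullet(\TT^s,L_\rho)=0$.

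The one potentially delicate point is justifying the K\"unneth decomposition in the flat-coefficients setting. If one prefers to avoid invoking it as a black box, an alternative is an induction on $s$: view $\TT^s\to\Ss^1_s$ as a trivial $\TT^{s-1}$-fiber bundle, set up the Leray-Serre spectral sequence of Section~\ref{leray} for this flat fibration with coefficients in $L_\rho$, and split into two cases according to whether the restriction of $\rho$ to $\pi_1(\TT^{s-1})$ is trivial (then the $E_2$-page reduces to $H^\bullet(\Ss^1_s,L_{\rho_s})$, which vanishes by \ref{S1} since $\rho_s$ must then be non-trivial) or non-trivial (then the fiber cohomology already vanishes by the inductive hypothesis, killing the $E_2$-page). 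Either route should go through without surprises; the K\"unneth approach is shorter and is what I would write up.
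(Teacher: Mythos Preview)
Your proposal is correct and follows essentially the same route as the paper: both factor $\TT^s=(\Ss^1)^s$, split the flat line bundle as an external tensor product of one-dimensional pieces, apply the K\"unneth formula for local systems, and conclude by invoking \ref{S1} on a non-trivial factor. The paper cites \cite[Corollary~2.3.31]{dimca} for the K\"unneth step, whereas you sketch an alternative Leray--Serre induction; either is fine and the argument goes through as you have it.
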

\begin{proof}
Let $\rho\in\Hom(\pi_1(M),\CC^*)$ be a non-trivial element. As $H^2(\TT^s,\ZZ)$ is a free abelian group, there exists $0 \neq \tau\in H^1(\TT^s,\CC)$ so that $\rho=\exp\tau$. We write then $L_\rho=L_\tau$.

Let us identify $\TT^s$ with $(\Ss^1)^s$, and let $p_k:\TT^s\rightarrow \Ss^1$ the the projection on the $k$-th component, for $k\in\{1,\ldots, s\}$. If we denote by $\nu$ a generator of $H^1(\Ss^1,\ZZ)$, then $\tau$ writes $\tau=\sum_{k=1}^sa_kp_k^*\nu$, with $a_1,\ldots, a_s\in\CC$, not all $2\pi i\ZZ$-valued. In particular, it follows that 
\begin{equation*}
L_\tau\cong p_1^*L_{a_1\nu}\otimes\ldots\otimes p_s^*L_{a_s\nu}.
\end{equation*} 
Now, by the  K\" unneth formula for local systems (see \cite[Corollary~2.3.31]{dimca}) it follows that:
\begin{equation*}
H^\bullet(\TT^s,L_\tau)\cong H^\bullet(\Ss^1,L_{a_1\nu})\otimes\cdots\otimes H^\bullet(\Ss^1,L_{a_s\nu}).
\end{equation*}
Since there exists at least one $k\in\{1,\ldots, s\}$ with $a_k\notin 2\pi i\ZZ$, \ref{S1} implies that $H^\bullet(\Ss^1,L_{a_k\nu)}$ vanishes, and the conclusion follows.\end{proof}

\subsection*{Notation} In all that follows, the sheaf of complex valued $\ce$ $l$-forms on $X$ will be denoted by $\Omega_X^l$ or simply by $\Omega^l$, if there is no ambiguity about the manifold $X$, and its global sections will be denoted by $\Omega_X^l(X)$ or by $\Omega^l(X)$.
Also, for a given OT manifold $X$ of type $(s,t)$  corresponding to $(K,U)$, we will sometimes denote by $\Gamma:=U\ltimes O_K$ its fundamental group and by $\hat X:=\HH^s\times\CC^t/O_K$. Concerning the compact tori that will appear in our discussion, we will use the notation $\TT^k$ for the $k$-dimensional torus viewed as a smooth manifold (without any additional structure), and $\TT$ for the $n$-dimensional abelian compact Lie group which, in our case, acts on $\hat X$, where $n=2t+s$.  As already mentioned, $U$ acts on $\TT$ by conjugation, and for any $u\in U$, we will denote by $c_u\in\Aut(\TT)$ the automorphism $c_u(a)=u^{-1}au$. For any $q\in\NN^*$ we will denote by $\mathcal{I}_q$ the set of multi-indexes $I=(0<i_1<\ldots<i_q\leq n)$ and for $I\in\mathcal{I}_q$ we will denote by $|I|$ the length of $I$ which is $q$. Finally, for a given representation $\rho:\pi_1(\TT^s)=U_\HH\rightarrow\CC^*$, we denote by $L_\rho$ the induced flat complex line bundle over $\TT^s$, and for a closed one-form $\theta$ on $X$, we denote by $\rho^\theta\in\Hom(U,\CC^*)=\Hom(\Gamma,\CC^*)$ the representation it induces.

\section{The de Rham cohomology}

In the next two sections, we will prove in two different ways the following result:

\begin{theorem}\label{teoremaA}
Let $X=X(K,U)$ be an OT manifold of type $(s,t)$ of complex dimension $m$. For any $l\in\{0,\ldots, 2m\}$ we have:
\begin{equation*}
H^l(X,\CC)\cong\displaystyle\bigoplus_{\substack{p+q=l\\|I|=q\\ \sigma_I=1}}\textstyle\bigwedge^p\CC\{d\ln \Im w_1,\ldots,d\ln \Im w_s\} \wedge e_I.
\end{equation*}

In particular, the Betti numbers of $X$ are given by:
\begin{equation*}
b_l=\sum_{p+q=l} {s\choose p}\cdot \rho_q, 
\end{equation*}
where $\rho_q$ is the cardinal of the set $\{I \mid |I|=q, \sigma_I = 1\}$.
\end{theorem}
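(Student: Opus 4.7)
The plan is to compute $H^\bullet(X,\CC)$ as the cohomology of the small complex of $\TT$-and-$U$-invariant forms on the cover $\hat X \simeq (\RR_{>0})^s\times\TT$, and then to split this complex along the multi-index $I$. To get there, starting from the identification $\Omega^\bullet(X)\cong\Omega^\bullet(\hat X)^U$, I first average over the compact fiber torus $\TT$:
$$A\omega \;=\; \int_\TT T_a^*\omega\, da, \qquad \omega\in\Omega^\bullet(\hat X).$$
Although the $\TT$-action does not descend to $X$, because $U$ acts on $\TT$ by the automorphisms $c_u$, the Haar measure of $\TT$ is $c_u$-invariant; the change of variable $a\mapsto c_u(a)$ combined with the conjugation identity $R_u\circ T_{c_u(a)}=T_a\circ R_u$ then shows that $A$ restricts to a map $\Omega^\bullet(\hat X)^U\to\Omega^\bullet(\hat X)^{\TT,U}$. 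The Cartan chain homotopy $H\omega=\int_\TT\int_0^1 T_{ta}^*\iota_{X_a}\omega\, dt\, da$ satisfying $A-\id=dH+Hd$ is $U$-equivariant by the same trick together with $R_u^*X_a=X_{c_u(a)}$, so $\Omega^\bullet(\hat X)^{\TT,U}\hookrightarrow\Omega^\bullet(\hat X)^U$ is a quasi-isomorphism.

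Next I describe the bi-invariant complex explicitly. Since $\TT$ acts by translations along the fiber of $\hat X\to(\RR_{>0})^s$, every $\TT$-invariant $l$-form on $\hat X$ is uniquely written as
$$\omega \;=\; \sum_{|J|+|I|=l} g_{J,I}(v)\,\eta_J\wedge e_I,$$
with $\eta_j:=d\ln\Im w_j$, $J\subset\{1,\ldots,s\}$, $I\in\mathcal{I}_{|I|}$, and $e_I$ the frame \eqref{frame}. Using $R_u^*\eta_j=\eta_j$, $R_u^*e_I=\sigma_I(u)\,e_I$ and $R_u^*(f(v))=f(\sigma(u)\cdot v)$, $U$-invariance becomes the functional equation
$$g_{J,I}(\sigma(u)\cdot v) \;=\; \sigma_I(u)^{-1}\,g_{J,I}(v), \qquad \forall u\in U.$$
Because $d\eta_j=0=de_I$, the differential acts by $d(g\,\eta_J\wedge e_I)=dg\wedge\eta_J\wedge e_I$ and therefore preserves the index $I$, so the bi-invariant complex splits as a direct sum $\bigoplus_I C^\bullet_I$, one summand for each multi-index $I$.

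Finally, the logarithmic coordinates $x_j=\ln v_j$ identify $(\RR_{>0})^s/U$ with $\TT^s$ and realize each such $g_{J,I}$ as a smooth section of the flat line bundle $L_{\sigma_I^{-1}}\to\TT^s$ associated to the representation $\sigma_I^{-1}\colon U=\pi_1(\TT^s)\to\CC^*$; under this identification $C^\bullet_I$ becomes the de Rham complex of $L_{\sigma_I^{-1}}$ wedged with the line $\CC\cdot e_I$. By \ref{isom}, its cohomology vanishes whenever $\sigma_I\neq 1$, while for $\sigma_I=1$ one recovers $\bigwedge^\bullet\CC\{\eta_1,\ldots,\eta_s\}\wedge e_I$; summing over the $I$ with $\sigma_I=1$ yields the formula of the theorem, and the Betti number count follows by dimension bookkeeping. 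The main potential obstacle is the verification that both the averaging operator $A$ and its Cartan chain homotopy $H$ genuinely preserve $U$-invariance despite the nontrivial conjugation action of $U$ on $\TT$; once this equivariance is in place, the splitting by $I$ and the appeal to \ref{isom} are essentially formal.
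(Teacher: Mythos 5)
Your proposal follows essentially the same route as the paper's first proof (Section 3): averaging over the fiber torus $\TT$, verifying $U$-equivariance of the averaging operator and of the chain homotopy via the conjugation $c_u$ and the invariance of Haar measure, splitting the bi-invariant complex by the multi-index $I$, identifying each piece with the de Rham complex of $L_{\sigma_I}^*$ over $\TT^s$, and concluding with \ref{isom}. The only cosmetic difference is that you write a single explicit Cartan homotopy $H$, while the paper's \ref{Gpoincare} constructs the homotopy only on closed forms via a one-parameter subgroup argument; these are equivalent.
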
 

In this section, we compute the de Rham cohomology of an OT manifold $X$ by identifying it with the cohomology of invariant forms on $X$ with respect to a certain compact torus action. In order to be precise, let us fix an OT manifold $X=X(K,U)$ of type $(s,t)$ and of complex dimension $m=s+t$. Recall that $\TT=\TT^n$ acts holomorphically by translations on $\hat X=\HH^s\times\CC^t/O_K$, but not on $X$. However, by identifying smooth forms on $X$ with smooth $U$-invariant forms on $\hat X$, it makes then sense to speak of $\TT$-invariant forms on $X$: these will be exactly the $U\ltimes\TT$-invariant forms on $\hat X$. Let us denote by $A^\bullet$ the graded sheaf of such invariant forms, which is a subsheaf of $\Omega_X^\bullet$. The differential $d$ acting on $\Omega^\bullet_X$ fixes $A^\bullet$, so $(A^\bullet,d)$ is a subcomplex of the de Rham complex of $X$. As in the usual setting of a manifold endowed with a compact group action, we have the following:

\begin{lemma}
There exists a projection graded morphism $\pi:\Omega^\bullet_X\rightarrow A^\bullet$ commuting with the differential $d$. 
\end{lemma}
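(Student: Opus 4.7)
The plan is to define $\pi$ via averaging over the compact torus $\TT$, which acts smoothly on $\hat X$ by translations. For $\omega \in \Omega_X^k(X)$, regarded as a $U$-invariant $k$-form on $\hat X$, set
\begin{equation*}
\pi(\omega) := \int_\TT T_t^* \omega \, dt,
\end{equation*}
where $dt$ is the Haar measure on $\TT$ normalized to total mass $1$ and $T_t : \hat X \to \hat X$ denotes translation by $t \in \TT$. Since $\TT$ is compact and acts smoothly, the integral converges pointwise and produces a smooth $k$-form on $\hat X$; the construction is manifestly $\CC$-linear and preserves degree.

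$\TT$-invariance of $\pi(\omega)$ is immediate from $T_s \circ T_t = T_{s+t}$ and translation-invariance of $dt$. The key step is to verify that $\pi(\omega)$ remains $U$-invariant, so that it descends to a form on $X$ lying in $A^k$. From the semidirect product structure one obtains the commutation relation $R_u \circ T_t = T_{c_u(t)} \circ R_u$ on $\hat X$, equivalently $T_t \circ R_u = R_u \circ T_{c_u^{-1}(t)}$, hence
\begin{equation*}
R_u^* \pi(\omega) = \int_\TT (T_t \circ R_u)^* \omega \, dt = \int_\TT T_{c_u^{-1}(t)}^* R_u^* \omega \, dt = \int_\TT T_{c_u^{-1}(t)}^* \omega \, dt,
\end{equation*}
after using $R_u^* \omega = \omega$. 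Since $c_u \in \Aut(\TT)$, the change of variables $s = c_u^{-1}(t)$ preserves the Haar measure, giving $R_u^* \pi(\omega) = \pi(\omega)$.

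The remaining properties are standard. Commutation with $d$ follows by moving $d$ under the integral (justified by smoothness of the $\TT$-action) and by the fact that pullback commutes with the exterior derivative, yielding $d \pi(\omega) = \pi(d\omega)$. The projection property is also immediate: if $\omega \in A^k$ is already $\TT$-invariant, then $T_t^* \omega = \omega$ for every $t \in \TT$, and the normalization of $dt$ yields $\pi(\omega) = \omega$. The construction extends to a morphism of sheaves because for any open $V \subset X$ its preimage in $\hat X$ is automatically $\TT$-invariant, and the averaging commutes with restriction.

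The only delicate ingredient, and hence the anticipated obstacle, is the $U$-invariance of the averaged form. Averaging over a compact group is formal when that group acts directly on the manifold in question, but here $\TT$ does \emph{not} act on $X$, only on $\hat X$, so compatibility with the discrete $U$-action must be ensured separately. This rests on the fact that $U$ acts on $\TT$ through the automorphisms $c_u$ together with the invariance of the normalized Haar measure under all such automorphisms, which is a consequence of its uniqueness on a compact group.
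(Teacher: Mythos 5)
Your proof is correct and follows essentially the same approach as the paper: define $\pi$ by averaging over the $\TT$-action on $\hat X$, and verify that the average descends to $X$ by combining the commutation relation between $R_u$ and the $\TT$-translations with the invariance of normalized Haar measure under the automorphisms $c_u$. The only difference is notational (you write $R_u\circ T_t = T_{c_u(t)}\circ R_u$, while the paper's $c_u$ is the inverse conjugation), which does not affect the argument.
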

\begin{proof}
The projection morphism will be given by averaging over the torus action. Let us fix $0\leq l\leq 2m$, and consider a (local) smooth $l$-form $\eta$ on $X$, identified with a $U$-invariant form on $\hat X$. Let $\mu$ be the $\TT$-invariant $n$-form on $\TT$ with $\int_\TT\mu=1$, and let:
\begin{equation*}
\pi\eta:=\int_\TT a^*\eta \mu(a).
\end{equation*}
Clearly, $\pi\eta$ is a $\TT$-invariant form on $\hat X$. In order to see that it descends to a form on $X$, we have to show that $\pi\eta$ is $U$-invariant. Indeed, for any $u\in U$, if $c_u\in\Aut(\TT)$ is the conjugation $a\mapsto u^{-1}au$ as before, then we have:
\begin{align*}
u^*(\pi\eta)&=\int_\TT(au)^*\eta \mu(a)=\int_\TT(uc_u(a))^*\eta \mu(a)=\\
                    &=\int_\TT c_u(a)^*\eta \mu(a)=\int_\TT a^*\eta \mu(c_u^{-1}(a))=\\
                    &=\int_\TT a^*\eta \mu(a)=\pi\eta.
\end{align*} 
Above, we made the change of variable $a\mapsto c_u^{-1}(a)$, and then used the fact that $\mu$ is a constant $c_u$-invariant form on $\TT$, so that $\mu(c_u^{-1}(a))=((c_u^{-1})^*\mu)(a)=\mu(a)$. 

Finally, it is clear from the definition of $\pi$ that it commutes with $d$, and that $\pi$ restricted to $A^\bullet$ is the identity.
\end{proof}

In the context of a manifold endowed with a compact group action, a standard result states that the de Rham cohomology of the manifold is the cohomology of invariant forms. This is still true in our context, and the proof follows the same lines:

\begin{lemma}\label{Gpoincare}
For any $0\leq l\leq 2m$, any open set $O\subset X$  and any $d$-closed form $\eta\in\Omega^l_X(O)$ there exists some $\be\in \Omega^{l-1}_X(O)$ so that $\pi\eta-\eta=d\beta$. In particular, we have an isomorphism $H^l[\pi]:H^l(X,\CC)\rightarrow H^l(X,A^\bullet(X))$.
\end{lemma}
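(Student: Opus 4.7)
The plan is to adapt the classical argument that, for a smooth action of a compact connected Lie group, the inclusion of invariant forms is a quasi-isomorphism. I will construct a homotopy operator $h$ such that $\pi - \mathrm{id} = dh + hd$ on $\Omega^\bullet_X(O)$; restricting to $d$-closed forms yields the first assertion, and the ``in particular'' part then follows formally.

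The first step is to fix, for each $a \in \TT$, a smooth path $\gamma_a \colon [0,1] \to \TT$ from $e$ to $a$, depending smoothly on $a$ and compatible with the $U$-action in the sense that $\gamma_{c_u(a)}(t) = c_u(\gamma_a(t))$ for all $u \in U$, $a \in \TT$, $t \in [0,1]$. The natural candidate is $\gamma_a(t) = \exp(t\tilde a)$, where $\tilde a$ is a lift of $a$ under $\exp \colon \mathfrak t \to \TT$ and $U$ acts linearly on $\mathfrak t$ covering $c_u$. The main technical point is that a global smooth section of $\exp$ does not exist on $\TT$; this is resolved either by a partition of unity on $\mathfrak t$ or by replacing the construction with the decomposition of $\Omega^\bullet$ into $\TT$-isotypic components and applying Cartan's formula on each non-trivial component (where some Lie derivative acts invertibly).

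Given such a family, define $\Psi_a \colon [0,1] \times \hat X \to \hat X$ by $\Psi_a(t,x) = \gamma_a(t) \cdot x$, a smooth homotopy from $\mathrm{id}$ to the translation $\phi_a$, with time-dependent velocity equal to the fundamental vector field $V_a(t)$ of $\dot\gamma_a(t)$. Cartan's homotopy formula then gives, for any smooth form $\hat\eta$ on $\hat X$,
\[
\phi_a^*\hat\eta - \hat\eta = d(K_a\hat\eta) + K_a(d\hat\eta), \qquad K_a\hat\eta := \int_0^1 \Psi_a(t, \cdot)^*\bigl(\iota_{V_a(t)}\hat\eta\bigr)\, dt.
\]
Setting $h\hat\eta := \int_\TT K_a\hat\eta \, \mu(a)$ and integrating over $\TT$ produces $\pi\hat\eta - \hat\eta = d(h\hat\eta) + h(d\hat\eta)$, which specialises to $\pi\hat\eta - \hat\eta = d(h\hat\eta)$ when $d\hat\eta = 0$.

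It remains to check that $h\hat\eta$ is $U$-invariant on the preimage of $O$ in $\hat X$, so as to descend to a form $\beta \in \Omega^{l-1}_X(O)$. This follows from the change of variable $a \mapsto c_u^{-1}(a)$ in the defining integral, using the $c_u$-invariance of $\mu$ (established in the preceding lemma) together with the $U$-equivariance of $\{\gamma_a\}_{a \in \TT}$. The ``in particular'' part is then immediate: $\pi \colon \Omega^\bullet(X) \to A^\bullet(X)$ and the inclusion $i \colon A^\bullet(X) \hookrightarrow \Omega^\bullet(X)$ satisfy $\pi \circ i = \mathrm{id}_{A^\bullet}$ directly and $i \circ \pi \sim \mathrm{id}_{\Omega^\bullet}$ via $h$, so they are chain homotopy inverses, yielding the isomorphism $H^l[\pi]$. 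The main obstacle is the smooth, globally $U$-equivariant choice of the paths $\gamma_a$; once this is in place, the rest of the argument is routine.
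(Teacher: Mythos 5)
Your proposal follows essentially the same route as the paper's proof: for each $a \in \TT$ pick a path $\gamma_a$ from $e$ to $a$, apply the Cartan homotopy formula to get $a^*\eta - \eta = d(K_a\eta) + K_a(d\eta)$, average over $\TT$ to produce $\beta$, and then verify $U$-invariance of $\beta$ using the change of variable $a \mapsto c_u^{-1}(a)$ together with equivariance of $\{\gamma_a\}$. (The paper takes $\gamma_a$ to be a one-parameter subgroup with $\gamma_a(1)=a$, derives the homotopy directly from $d\eta=0$ rather than quoting Cartan's formula, and carries out the same $U$-invariance verification.)

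The one place where you flag a difficulty deserves a sharper diagnosis. You say "the main technical point is that a global smooth section of $\exp$ does not exist on $\TT$", and propose a partition of unity on $\mathfrak t$ as a fix. But smoothness of $a\mapsto\gamma_a$ is not actually needed — measurability in $a$ suffices for the integral $\int_\TT K_a\eta\,\mu(a)$, and a measurable section of $\exp:\mathfrak t\to\TT$ always exists. The genuine constraint is the $U$-equivariance $\gamma_{c_u(a)}=c_u\circ\gamma_a$, which (with $\gamma_a(t)=\exp(t\tilde a)$) amounts to choosing the lifts so that $\widetilde{c_u(a)}=(c_u)_*\tilde a$ for all $u\in U$; equivalently, a $U$-invariant measurable fundamental domain for $O_K$ in $\mathfrak t$. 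A partition of unity on $\mathfrak t$ does not automatically give this — one would need the partition itself to be $U$-equivariant, which is exactly the original problem in a different guise. The correct resolution (which the paper silently assumes and does not justify either) is that for $u\ne 1$ the map $c_u$ on $\TT$ has only finitely many fixed points (since $1$ is not an eigenvalue of $M_u$), so $U$ acts freely off a countable null set; one can therefore choose a measurable set of orbit representatives, fix lifts there, and propagate $U$-equivariantly. Once you note this, your argument (and the paper's) closes up; your alternative of decomposing $\Omega^\bullet$ into $\TT$-isotypic components and inverting Lie derivatives is a genuinely different and workable route, but it has its own convergence bookkeeping and still needs a $U$-equivariant choice of $\xi_\chi$ for each non-trivial character, for which the same free-action observation is the key.
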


\begin{proof}
Let $\eta\in\Omega^l_X(O)$ be a closed form, let $\hat O$ be the preimage of $O$ in $\hat X$ and, as before, identify $\eta$ with a form on $\hat O$. Let $a\in\TT$ and let $\{\Phi_a^v\}_{v\in\RR}$ be a one-parameter subgroup of $\TT$ with $\Phi_a^1=a$. Let $\xi_a$ be the vector field on $\hat X$ generated by $\Phi_a$, and consider the map $F_a:\RR\times \hat O\rightarrow \hat O$, $(v,x)\mapsto \Phi_a^v(x)=x+va$. We then have $F_a^*\eta=\eta_1+dv\wedge\eta_2$, with $\eta_1(v,\cdot)=(\Phi_a^v)^*\eta$ and $\eta_2(v,\cdot)=\iota_{\xi_a}(\Phi_a^v)^*\eta$. If we denote by $d_X$ the differential with respect to the $X$-variables on $\RR\times\hat O$, then $d\eta=0$ implies:
\begin{equation*}
0=dF_a^*\eta=d_X\eta_1+dv\wedge\pa{v}\eta_1-dv\wedge d_X\eta_2.
\end{equation*}
In particular, we have $\pa{v}\eta_1=d_X\eta_2$, or also:
\begin{equation*}
a^*\eta-\eta=\int_0^1\pa{v}\eta_1dv=\int_0^1d_X\eta_2dv=d\int_0^1\eta_2dv.
\end{equation*}
Denoting by $\beta_a$ the form $\int_0^1\eta_2dv$ and by $\beta:=\int_T\beta_a\mu(a)$, we have $\pi\eta-\eta=d\beta$, and we are then left with showing that the form $\beta$ is $U$-invariant. Let $u\in U$ and $a\in\TT$. Upon noting that 
\begin{align*}
u^{-1}_*\xi_a&=\frac{d}{dv}|_{v=0}(u^{-1}\Phi^v_a)=\frac{d}{dv}|_{v=0}(\Phi^v_{c_u(a)}u^{-1})=\xi_{c_u(a)}
\end{align*}
we have:
\begin{align*}
u^*\be_a&=\int_0^1u^*\iota_{\xi_a}(\Phi^v_a)^*\eta dv=\int_0^1\iota_{u^{-1}_*\xi_a}u^*(\Phi_a^v)^*\eta dv\\
&=\int_0^1\iota_{\xi_{c_u(a)}}(u\Phi^v_{c_u(a)})^*\eta dv=\int_0^1\iota_{\xi_{c_u(a)}}(\Phi^v_{c_u(a)})^*\eta dv=\beta_{c_u(a)}.
\end{align*}
So, as in the previous lemma, the $\TT$-invariance of $\mu$ implies then that $\int_\TT\beta_{c_u(a)}\mu(a)=\beta$.

For the last assertion, it is enough to see that the inclusion $\iota:A^\bullet\rightarrow \Omega^\bullet$ induces an isomorphism $H(\iota)$ in cohomology. If $\eta\in A^l(X)$ verifies $\eta=d\al$, $\al\in\Omega^{l-1}(X)$, then $\eta=\pi\eta=d\pi\al$, so $H(\iota)$ is injective. If $\eta\in\Omega^l$ is a closed form, then by the above we have $H(\iota)[\pi\eta]=[\pi\eta]_{dR}=[\eta]_{dR}$, so $H(\iota)$ is surjective. 
\end{proof}

For the sequel, we will fix some $l\in\{0,\ldots, 2m\}$. Recalling that the tangent bundle of $\hat X\cong (\RR_{>0})^s\times\TT^n$ splits smoothly as  $T\hat X=E\oplus V$, where $E$ is the pullback of $T(\RR_{>0})^s$ and $V$ is the pullback of $T\TT^n$ on $\hat X$, we have:
\begin{equation*}
\textstyle\bigwedge^lT^*\hat X=\oplus_{p=0}^l\textstyle\bigwedge^p E^*\otimes\textstyle\bigwedge^{l-p}V^*.
\end{equation*}
If we denote by $A^l_p$ the sheaf which associates to any open set $O\subset X$ 
\begin{equation*}
A^l_p(O):=A^l(O)\cap \ce(\hat O,\textstyle\bigwedge^p E^*\otimes\textstyle\bigwedge^{l-p}V^*\otimes\CC),
\end{equation*}
  where $\hat O$ is the pre-image of $O$ in $\hat X$, then we also have:
\begin{equation}\label{grad}
A^l = \oplus_{p=0}^l A^l_p.
\end{equation} 
At the same time, $A^l$ can be seen as: 
\begin{equation*}
A^l(O)=\{\eta\in \Omega^l_{\hat X}(\hat O)|\eta \text{ is }U \text{ invariant and } \LL_Z\eta=0 \ \ \forall Z\in\ce(\hat O, V)\}
\end{equation*} 
which implies that the differential $d$  is compatible with the grading of $A^\bullet$ given by \eqref{grad}, in the sense that $d(A^l_p)\subset A^{l+1}_{p+1}$ for any $0\leq p\leq l$. 

Hence, a form $\eta=\sum_{p=0}^l\eta_p\in A^l(X)$ splitted with respect to the grading \eqref{grad} is closed if and only if each $\eta_p$ is closed. As a consequence, the complex $0\rightarrow A^\bullet(X)$ splits in the subcomplexes:
\begin{equation}
\xymatrix{
C_p^\bullet:  \qquad 0 \ar[r]^-d & A_0^p(X) \ar[r]^-d & A_1^{p+1}(X) \ar[r]^-d & A_2^{p+2}(X)\ar[r]^-d &\ldots }
\end{equation}

for $0\leq p\leq s$. Moreover, if a form $\eta=\sum_{p=0}^l\eta_p\in A^l(X)$ is exact: $\eta=d\beta$, then writing again $\be=\sum_{q=0}^{l-1}\be_q$, we must have $\eta_{p+1}=d\be_p$ for any $0\leq p\leq l-1$ and $\eta_0=0$. So we see that $\eta$ is exact if and only if each $\eta_p$ is. Hence, if we let:
\begin{equation*}
H^l_p(X,A):=\frac{\ker d:A^l_p(X)\rightarrow A^{l+1}_{p+1}(X)}{\Im d:A^{l-1}_{p-1}(X)\rightarrow A^l_p(X)}=H^p(C^\bullet_{l-p})
\end{equation*}
then we have:
\begin{equation}\label{split1}
H^l(X,A^\bullet(X))=\oplus_{p=0}^l H^l_p(X,A).
\end{equation}

Now let us take a closer look at the complex $C_l^\bullet$. Denoting by $d_E$ the differentiation in the $E$ direction, $(C^\bullet_l,d)$ is a subcomplex of:
\begin{equation*}
(\ce(\hat X,\textstyle\bigwedge^lV^*\otimes\textstyle\bigwedge^\bullet E^*\otimes\CC), d_E)
\end{equation*}
which, in turn, is just $\ce(\hat X,\bigwedge^l V^*\otimes\CC)$ tensorized by:
\begin{equation*}
\xymatrix{
0 \ar[r] & \ce(\hat X, \CC) \ar[r]^-{d_E} &\ce(\hat X,  E^*\otimes\CC) \ar[r]^-{d_E} & \ce(\hat X, \bigwedge^2 E^*\otimes\CC)\ar[r] &\ldots }
\end{equation*}

But recall that, for any $0\leq q\leq l$, $\bigwedge^qV^*\otimes\CC$ is globally trivialised over $\hat X$ by $\{e_I\}_{I\in\mathcal {I}_q}$, where $\mathcal{I}_q$ denotes the set of all multi-indexes $I=(0<i_1<\ldots<i_q\leq n)$ and the forms $e_I$ were defined in \eqref{frame}. Thus, for any $0\leq p\leq l$, we have:
\begin{equation*}
\ce(\hat X, \textstyle\bigwedge^qV^*\otimes\textstyle\bigwedge^pE^*\otimes\CC)=\oplus_{I\in\mathcal{I}_q}\ce(\hat X, \textstyle\bigwedge^p E^*)\otimes\CC e_I.
\end{equation*} 
Moreover, a section $\eta=f\otimes e_I$ of $\bigwedge^p E^*\otimes \CC e_I$ belongs to $A_p^{p+|I|}$ if and only if it is $\TT$-invariant and
\begin{equation}\label{equiv}
u^*f=\sigma_I(u)^{-1}f \text{ for any }u\in U.
\end{equation}
If we denote by $E^p_{\sigma_I}$ the sheaf of $\TT$-invariant sections $f$ of $\bigwedge^pE^*\otimes \CC$ which are $\sigma_I^{-1}$ equivariant, i.e. verify \eqref{equiv}, it follows that we have:
\begin{equation*}
A^{q+p}_p=\oplus_{I\in\mathcal{I}_q} E^p_{\sigma_I}\wedge e_I.
\end{equation*}
Moreover, as the $e_I$'s are closed forms, we have: 
\begin{equation*}
d: E^p_{\sigma_I}\wedge e_I\rightarrow E^{p+1}_{\sigma_I}\wedge e_I \ \ \ \ \forall I\in\mathcal{I}_q.
\end{equation*} 
So finally we get that the complex $C^\bullet_{l-p}$ splits into the complexes on $X$:
\begin{equation}\label{cohE}
\xymatrix{
C_{l-p}^\bullet(I):  \qquad 0 \ar[r] & E^0_{\sigma_I}(\hat X)\wedge e_I \ar[r]^-d & E^{1}_{\sigma_I}(\hat X)\wedge e_I \ar[r]^-d & E^2_{\sigma_I}(\hat X)\wedge e_I\ar[r]^-d &\ldots }
\end{equation}
indexed after all $I\in\mathcal{I}_{l-p}$. So also the cohomology splits:
\begin{equation}\label{split2}
H^l_p(X,A)=\oplus_{I\in\mathcal{I}_{l-p}} H^p_{\sigma_I}(X, A)
\end{equation}
where $H^p_{\sigma_I}(X, A):=H^p(C^\bullet_{l-p}(I))$.

At the same time, the $\TT$-invariant sections of $\bigwedge^pE^*\otimes \CC$ over $\hat X$ naturally identify with the sections of $\Omega^p_{(\RR_{>0})^s}$ over $(\RR_{>0})^s$. Hence, the sections of $E^p_{\sigma_I}$ coincide then with the sections of $\Omega^p_{\TT^s}\otimes L^*_{\sigma_I}$, and we have:
\begin{equation}\label{isocoh}
H^p_{\sigma_I}(X,A)\cong H^p(\TT^s,L^*_{\sigma_I})\otimes e_I. 
\end{equation}
So, putting together \eqref{isocoh}, \eqref{split2}, \eqref{split1}, \ref{Gpoincare} and \ref{isom}, we get: 
\begin{equation*}
H^l(X,\CC)\cong\oplus_{p+q=l}\oplus_{I\in\mathcal{I}_q}H^p(\TT^s,L^*_{\sigma_I})\otimes e_I
\end{equation*}
leading, together with \ref{isom}, to \ref{teoremaA}.

\section{The Leray-Serre spectral sequence of OT manifolds}

Let $X=X(K,U)$ be an OT manifold of type $(s,t)$. In this section, we are interested in computing its de Rham cohomology using the Leray-Serre spectral sequence associated to the fibration depicted in \eqref{fiberbundle}:
\begin{equation*}
\mathbb{T}^n \rightarrow X \xrightarrow{\pi} \mathbb{T}^s.
\end{equation*}
We endow the de Rham complex of $X$ with the filtration described in \eqref{filtrarea}. It turns out that the Leray-Serre sequence associated to this filtration degenerates at the page $E_2$ and we prove this by outlining the special properties of the OT fiber bundle. 

Let us start by noting that we have two fiber bundles over $\TT^s$ associated to this fibration:
\begin{equation}\label{fibrati1}
\Omega^\bullet(\mathbb{T}^n) \rightarrow {\bf{\Omega^\bullet(\mathbb{T}^n)}} \rightarrow \mathbb{T}^s
\end{equation}
\begin{equation}\label{fibrati2}
H^\bullet(\mathbb{T}^n,\CC) \rightarrow {\bf {H^\bullet(\mathbb{T}^n)}} \rightarrow \mathbb{T}^s.
\end{equation}
Indeed, recall that we have an action of $U_\HH$ on $\TT^n$ defined in \eqref{reprezentare}, with respect to which $\pi$ is defined as $(\RR_{>0})^s\times\TT^n/U_\HH\rightarrow\TT^s$. But then we also have an induced action of $U_\HH$ on $\Omega^\bullet(\TT^n)$ by push-forward, which defines ${\bf{\Omega^\bullet(\TT^n)}}:=(\RR_{>0})^s\times\Omega^\bullet(\TT^n)/U_\HH\rightarrow \TT^s$ as an infinite-dimensional vector bundle over $\TT^s$. Also we have an induced action of $U_\HH$ on $H^\bullet(\TT^n,\CC)$, which then defines the vector bundle ${\bf{H^\bullet(\TT^n)}}:=(\RR_{>0})^s\times H^\bullet(\TT^n,\CC)/U_\HH\rightarrow\TT^s$. 

{\textit{Fact 1: The fibration is locally constant}}, meaning that if $U_\al \cap U_\be$ is a connected open subset of $\TT^s$, then $g_{\al\be}: U_\al \cap U_\be \times \mathbb{T}^n \rightarrow U_\al \cap U_\be \times \mathbb{T}^n$ only depends on the $\mathbb{T}^n$-variables. This allows us to make the following identification:
\begin{equation}\label{e0iso}
E_0^{p, q}\simeq  \Omega^p(\mathbb{T}^s, {\bf{\Omega^q(\mathbb{T}^n)}}).
\end{equation}
Indeed, recall that we have $TX=E\oplus V$, where, locally, $E$ is the tangent bundle of the base $\TT^s$ and $V$ is the tangent bundle of the fiber $\TT^n$, and we have identified $E_0^{p,q}$ with $\ce(X,\bigwedge^pE^*\otimes\bigwedge^qV^*\otimes\CC)$. Consider $\eta \in E_0^{p,q}$ and suppose that $U_\al$ is an open set of $\TT^s$ trivializing $\pi$ via $\phi_\al:\pi^{-1}(U_\al)\rightarrow U_\al\times\TT^n$. Write $(\phi_\al)_* \eta = \sum_i a_{i}^\al\wedge b^\al_i$, where, for each $i$, $a^\al_i$ is a $p$-form on $U_\al$ and $b^\al_i$ is an element of $\ce(U_\al\times\TT^n,\bigwedge^qT^*\TT^n\otimes\CC)$ which may depend on both the coordinates of $U_\al$ and of $\mathbb{T}^n$. Of course, the forms $a^\al_i$ and $b^\al_i$ are not unique. If $(U_\be,\phi_\be)$ is another trivializing open set for $\pi$ intersecting $U_\al$, then we have:
\begin{equation*}
(\phi_\be)_* \eta = (\phi_\be \circ \phi_\al^{-1})_* \circ (\phi_\al)_* \eta = (g_{\be\al})_*  \sum_i a^\al_i \wedge b^\al_i.
\end{equation*}
As $g_{\be\al}$ is locally constant on $U_\al\cap U_\be$, $(g_{\be\al})_*a_i^\al = a_i^\al$, therefore $(\phi_\be)_* \eta = \sum_i a^\al_i \wedge (g_{\be\al})_*b^\al_i$. In particular, for each $i$, the forms $\{a_i^\al\}_\al$ glue up to a well-defined global $p$-form $a_i$ on $\TT^s$ and $\eta$ is then an element of $\Omega^p(\TT^s, \bf{\Omega^q(\TT^n)})$.

 {\textit{Fact 2: ${\bf{H^q(\mathbb{T}^n)}}$ is a completely reducible local system.}} Indeed, as already mentioned, $\bf{H^q(\TT^n)}$ is a flat vector bundle defined by the induced representation $[R]:U_\HH\rightarrow\Aut(H^q(\TT^n,\CC))$. In order to determine $[R]$, recall that we have fixed a frame for $V^*$ over $(\RR_{>0})^s\times\TT^n$ given by $\{e_1,\ldots, e_n\}$ in \eqref{frame}. As this frame does not depend on $(\RR_{>0})^s$, it induces a frame for $T^*\TT^n$ over $\TT^n$ which we will denote the same, and we have $H^q(\TT^n,\CC)=\bigwedge^q\CC\{e_1,\ldots,e_n\}=\oplus_{I\in\mathcal{I}_q}\CC e_I.$ Then, for any $I$ and any $u\in U_\HH$, we have $[R](u)e_I=[R(u)_*e_I]=\sigma^{-1}_I(u)e_I$, or also $[R]=\sum_{I\in\mathcal{I}_q}\sigma^{-1}_I$ under the above direct sum decomposition. 

For any multi-index $I$, let us denote, as before, by $L_{\sigma_I}\rightarrow \TT^s$ the flat line bundle defined by the representation $\sigma_I$, so that ${\bf{H^q(\TT^n)}}=\oplus_{I\in\mathcal{I}_q}L^*_{\sigma_I}$. If $\nabla^I$ denotes the induced connection on $L^*_{\sigma_I}$ and $\nabla_q$ denotes the induced flat connection on $\bf{H^q(\TT^n)}$, then also $\nabla_q$ splits with respect to the direct sum decomposition as $\nabla_q=\sum_{I\in\mathcal{I}_q}\nabla^I$. In particular, we also obtain:
\begin{equation}\label{splitH}
H^p(\mathbb{T}^s, {\bf{H^q(\mathbb{T}^n)}})\cong\oplus_{I\in\mathcal{I}_q}H^p(\TT^s,L^*_{\sigma_I}).
\end{equation}

 {\textit {Fact 3: The base is a torus.}} This allows us to compute, via \ref{isom}:
\begin{equation}\label{EE}
H^p(\TT^s,{\bf{H^q(\TT^n)}})\cong\oplus_{\substack{I\in\mathcal{I}_{q}\\ \sigma_I\equiv 1}}H^p(\TT^s,\CC)\otimes e_I.
\end{equation}

Let us now describe the pages of the Leray-Serre sequence of the OT fibration.

{\textbf{Page 0}}: By Fact 1, we have $E_0^{p, q} \simeq \Omega^p(\mathbb{T}^s, {\bf{\Omega^q(\mathbb{T}^n)}})$.  In order to determine $d_0:E_0^{p,q}\rightarrow E_0^{p,q+1}$, which on $X$ corresponds to differentiation in the $V$-direction, let us first identify the corresponding operator on ${\bf{\Omega^\bullet(\TT^n)}}$. Consider the differential of $\TT^n$ which acts on $\Omega^\bullet(\TT^n)$, and then define by $d^v$ the operator acting on $(\RR_{>0})^s\times\Omega^\bullet(\TT^n)$ trivially on the first factor, and as the differential of $\TT^n$ on the second one. Clearly, this operator commutes with the action of $U_\HH$, and so descends to an operator $d^v$ on ${\bf{\Omega^\bullet(\TT^n)}}$. Under the isomorphism \eqref{e0iso}, we have then $d_0=d^v$, i.e. for $\hat{\eta}=\sum a_i\otimes b_i^\al\in\Omega^p(U_\al, {\bf{\Omega^q(\TT^n)}})$ we have:
\begin{equation}\label{D0}
 d_0(\sum a_i \otimes b^\al_i) = \sum (-1)^pa_i \otimes d^v b^\al_i.
 \end{equation}

{\bf{Page 1:}} By \eqref{D0} we have $E_1^{p, q} \simeq \Omega^p(\mathbb{T}^s, {\bf{H^q(\mathbb{T}^n)}}).$ The differential $d_1: \Omega^p(\mathbb{T}^s, {\bf{H^q(\mathbb{T}^n)}}) \rightarrow \Omega^{p+1}(\mathbb{T}^s, {\bf{H^q(\mathbb{T}^n)}})$ is identified then with $d_1=d^\nabla$, where $d^\nabla$ is the differential operator on $\TT^s$ induced by the flat connection $\nabla$ on ${\bf{H^q(\TT^n)}}$.

 {\bf{Page 2}}: From above, we deduce that $E_2^{p, q} \simeq H^p(\mathbb{T}^s, {\bf{H^q(\mathbb{T}^n)}}).$ Let $[[\eta]_{d_0}]_{d_1}\in E_2^{p,q}$ be a non-zero element and let $\eta= \sum a_i\otimes b_i$ locally. Then $[\eta]_{d_0}=\sum a_i\otimes[b_i]_{d^v}$. The fact that $[\eta]_{d_0}\in\ker d_1$ implies that there exists $\gamma\in  E_0^{p+1,q-1}$ so that $d^\nabla\sum(a_i\otimes b_i)=d^v\gamma$. As in Section 2.2, we have $(d^\nabla+d^v)(\sum a_i\otimes b_i-\gamma)=-d^\nabla\gamma\in\ker d^v\subset E_0^{p+2,q-1}$, hence, according to \eqref{D2}, $d_2$ is given by:
 \begin{equation}\label{D00}
d_2([\sum a_i\otimes[b_i]_{d^v}]_{d_1})=[[-d^\nabla\gamma]_{d^v}]_{d_1}. 
\end{equation}

At the same time, by \eqref{EE} in Fact 3, we have that any element $[[\eta]_{d^v}]_{d_1}$ of $E_2^{p,q}$ can be represented by a sum: 
\begin{equation}\label{eta}
\eta=\sum_{\substack{I\in\mathcal{I}_{q}\\ \sigma_I\equiv 1}}\al_I\otimes e_I\in E^{p,q}_0
\end{equation} 
where for each $I$ appearing in the sum, $\al_I\in\Omega^p(\TT^s)$ is a closed form on $\TT^s$, and $e_I$, given in \eqref{frame}, is $U$ invariant on $\hat X$, and so descends to a global element of $\bf{\Omega^q(\TT^n)}$ on $\TT^s$, verifying $d^\nabla e_I=0$. In particular, we have $d^\nabla\eta=0=d^v(0)$, so, by \eqref{D00} it follows that $d_2[[\eta]_{d^v}]_{d_1}=[[-d^\nabla 0]_{d^v}]_{d_1}=0$, so $d_2\equiv 0$. 

Finally, for any $r\geq 2$, any class in $E_r^{p,q}$ can be represented by $[\ldots[[\eta]_{d_0}]_{d_1}\ldots]_{d_{r-1}}$, where $\eta$ is of the form \eqref{eta}. Since $d\eta=(d^\nabla+d^v)\eta=0$, by \eqref{diffr} all $\xi_1, \ldots,\xi_{r-1}$ can be chosen to be zero, so $d_r\equiv 0$. Thus we have shown:

\begin{theorem}\label{teorema} The Leray-Serre spectral sequence of OT manifolds degenerates at $E_2$.
\end{theorem}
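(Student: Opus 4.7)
The plan is to exhibit, for every bidegree $(p,q)$, a distinguished representative in each class of $E_2^{p,q}$ which is already $d$-closed on $X$, and then to read off from the general formula \eqref{diffr} that all higher differentials must vanish on such representatives.

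First I would invoke the identifications already established: $E_2^{p,q}\cong H^p(\TT^s,\mathbf{H^q(\TT^n)})$, together with the decomposition of the local system from Fact 2 and the collapse observation \eqref{EE} from Fact 3, which gives
\begin{equation*}
E_2^{p,q}\cong\bigoplus_{\substack{I\in\mathcal{I}_q\\ \sigma_I\equiv 1}} H^p(\TT^s,\CC)\otimes e_I.
\end{equation*}
This tells me that any class in $E_2^{p,q}$ admits a representative of the form $\eta=\sum_{\sigma_I\equiv 1}\alpha_I\otimes e_I$, with each $\alpha_I$ a $d$-closed $p$-form on $\TT^s$ and each $e_I$, by the condition $\sigma_I\equiv 1$, a $U$-invariant form on $\hat X$ that descends to a parallel section of $\mathbf{H^q(\TT^n)}$ over $\TT^s$.

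Next I would check that such a representative is already closed as a form on $X$. Indeed, on $\hat X$ the form $e_I$ is literally constant in the chosen frame, hence $d^v e_I=0$ and also $d^\nabla e_I=0$; combined with $d\alpha_I=0$ on the base and the splitting $d=d^\nabla+d^v$ coming from the flat bundle structure, this forces $d\eta=0$.

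Finally, I would plug this into the general formula \eqref{diffr}. Since $d\eta=0$, one may take all the auxiliary elements $\xi_1,\dots,\xi_{r-1}$ in the diagram-chasing procedure to be zero: the condition $d\eta-d(\xi_1+\cdots+\xi_{r-1})\in F^{p+r}K^{p+q+1}$ is satisfied trivially. Then \eqref{diffr} gives
\begin{equation*}
d_r\bigl([\cdots[[\eta]_{d_0}]_{d_1}\cdots]_{d_{r-1}}\bigr)=[\cdots[[\widehat{d\eta-0}]_{d_0}]_{d_1}\cdots]_{d_{r-1}}=0
\end{equation*}
for every $r\geq 2$, which is exactly the degeneration at $E_2$. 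The only real content beyond what is already set up in Section 4 is the first step—producing a $d$-closed lift of each $E_2$-class—and that is handed to us precisely by the vanishing of $\sigma_I$ on the non-contributing indices, so no genuine obstacle remains.
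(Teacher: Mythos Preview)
Your proposal is correct and follows essentially the same route as the paper's own argument: represent each $E_2^{p,q}$-class by $\eta=\sum_{\sigma_I\equiv 1}\alpha_I\otimes e_I$ with $d\eta=0$, then invoke \eqref{diffr} with all $\xi_j=0$ to kill every $d_r$ for $r\geq 2$. The only cosmetic difference is that the paper first spells out the case $r=2$ via the explicit formula \eqref{D00} before treating general $r$, whereas you go directly to arbitrary $r$.
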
 

As a corollary of this, one immediately obtains \ref{teoremaA}.

\section{Twisted cohomology of OT manifolds}

Now we want to compute the  twisted  cohomology groups of OT manifolds with respect to any closed one-form. We recall that $v_j$ stands for $\Im w_j$, for every $1\leq j \leq s$. The exact statement that we will obtain is the following:

\begin{theorem}\label{teoremaB} Let $X=X(K,U)$ be an OT manifold of type $(s,t)$ and of complex dimension $m$, and let $\theta=\sum_{k=1}^sa_k d\ln v_k$ be a closed one-form on $X(K, U)$, where $a_1,\ldots, a_s\in\CC$. Then for any $l\in\{0,\ldots, 2m\}$ we have:
\begin{equation*}
H^l_\theta(X,\CC)\cong\displaystyle\bigoplus_{\substack{p+q=l\\|I|=q\\ \rho^{\theta}\otimes\sigma_I=1}}\textstyle\bigwedge^p\CC\{d\ln v_1,\ldots,d\ln v_s\} \wedge(v_1^{a_1}\cdot \ldots \cdot v_s^{a_s})e_I.
\end{equation*}
In particular, the corresponding twisted Betti numbers are given by:
\begin{equation*}
b_l^\theta=\sum_{p+q=l} {s\choose p}\cdot \rho^\theta_q,
\end{equation*}
where $\rho^\theta_q$ is the cardinal of the set $\{I \mid |I|=q, \rho^{\theta}\otimes\sigma_I = 1\}$.
\end{theorem}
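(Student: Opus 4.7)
The plan is to adapt the Leray-Serre spectral sequence argument of Section~4, replacing $d$ by the twisted differential $d_\theta := d - \theta\wedge\cdot$ throughout, and filtering the de Rham complex of $X$ as in \eqref{filtrarea}. A first observation is that, under the identification $\TT^s = (\RR_{>0})^s/U_\HH$, each $d\ln v_k$ descends from $(\RR_{>0})^s$ to $\TT^s$ (since $v_k \mapsto \sigma_k(u) v_k$ under $U_\HH$, so $d\ln v_k$ is $U_\HH$-invariant), hence $\theta$ is the pullback via $\pi$ of a closed one-form $\bar\theta$ on the base. Since $\theta\wedge\cdot$ strictly raises the horizontal filtration index, $d_\theta$ preserves $F^\bullet K^\bullet$ and yields a spectral sequence $(E_r^{p,q},d_r)$ converging to $H^\bullet_\theta(X,\CC)$.

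The pages $E_0$ and $E_1$ are computed exactly as in Section~4: the bidegree $(0,1)$ part of $d_\theta$ is still $d^v$, so $d_0 = d^v$ and the identification of $E_1^{p,q}$ is unchanged. The departure occurs at the next stage, where the horizontal contribution of $d_\theta$ reads $d_1 = d^\nabla - \bar\theta\wedge\cdot$. Under the splitting from \eqref{splitH}, $d_1$ restricts on each summand to $d^{\nabla^I} - \bar\theta\wedge\cdot$ acting on $\Omega^\bullet(\TT^s,L^*_{\sigma_I})$.

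The key step is to identify this twisted complex with an untwisted one on a different flat line bundle. Lift $\bar\theta$ to its primitive $\phi = \sum_{k=1}^s a_k \ln v_k$ on $(\RR_{>0})^s$; then $e^\phi = v_1^{a_1}\cdots v_s^{a_s}$ transforms under $u\in U_\HH$ by $\rho^\theta(u) = \prod_k \sigma_k(u)^{a_k}$, which coincides with the monodromy character of $\theta$. A direct calculation gives $(d - \bar\theta\wedge)(e^\phi g) = e^\phi\, dg$, so multiplication by $e^\phi$ defines an isomorphism of complexes
\begin{equation*}
(\Omega^\bullet(\TT^s, L^*_{\sigma_I\otimes\rho^\theta}),\, d^\nabla) \;\xrightarrow{\;\cong\;}\; (\Omega^\bullet(\TT^s, L^*_{\sigma_I}),\, d^{\nabla^I} - \bar\theta\wedge\cdot).
\end{equation*}
Passing to cohomology and applying \ref{isom}, the $I$-summand vanishes unless $\sigma_I\otimes\rho^\theta$ is trivial, in which case it contributes $H^p(\TT^s,\CC) = \bigwedge^p \CC\{d\ln v_1,\ldots,d\ln v_s\}$. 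This recovers the description of $E_2^{p,q}$ predicted in \ref{teoremaB}.

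It remains to show that the sequence degenerates at $E_2$. The intertwining above furnishes explicit $d_\theta$-closed representatives: for each $I$ with $\sigma_I\otimes\rho^\theta = 1$ and each closed $\alpha_I$ on $\TT^s$, the form $v_1^{a_1}\cdots v_s^{a_s}\,\alpha_I\wedge e_I$ is $U_\HH$-invariant (the character of $e^\phi$ exactly cancels that of $e_I$), hence descends to $X$, and a direct check using $d(v_1^{a_1}\cdots v_s^{a_s}) = v_1^{a_1}\cdots v_s^{a_s}\cdot\theta$ together with $d\alpha_I = de_I = 0$ shows it is $d_\theta$-closed. Because such a representative is already genuinely $d_\theta$-closed, one may choose all auxiliary terms $\xi_1,\ldots,\xi_{r-1}$ in the chase formula \eqref{diffr} to be zero, so $d_r\equiv 0$ for $r\geq 2$, and therefore $H^l_\theta(X,\CC) = \bigoplus_{p+q=l} E_2^{p,q}$; the Betti number formula follows by a dimension count. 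The principal technical obstacle is the bookkeeping of characters in the intertwining step: one must verify that the character appearing on the untwisted side matches $\sigma_I\otimes\rho^\theta$ with the convention set up in the paper, and that the sign conventions for $d_\theta$ propagate correctly through the chase, after which the rest of Section~4 transports essentially verbatim.
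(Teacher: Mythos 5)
Your proposal is correct and follows essentially the same route as the paper: the Leray--Serre spectral sequence of the OT fibration with the twisted differential, identification of $E_1$ with $\Omega^\bullet(\TT^s, L^*_\theta\otimes\mathbf{H}^q(\TT^n))$, complete reducibility plus Lemma~\ref{isom} to compute $E_2$, the existence of genuinely $d_\theta$-closed representatives to force degeneration at $E_2$, and tensoring with $v_1^{a_1}\cdots v_s^{a_s}$ to land on invariant forms. The only cosmetic difference is that you make the intertwining isomorphism via $e^\phi$ explicit at the chain level, whereas the paper achieves the same identification by absorbing the $-\bar\theta\wedge$ term directly into the flat connection on $L^*_\theta\otimes\mathbf{H}^q(\TT^n)$.
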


It is already known from \cite{ot} that $b_1(X)=s$, hence any closed one form is cohomologous to one of the form $\pi^*\eta$, where $\eta$ is closed one-form on $\mathbb{T}^s$. As the  twisted  cohomology $H^\bullet_{\theta}$ depends only on the de Rham cohomology class of $\theta$, and not on $\theta$ itself, we can assume that $\theta$ is the pullback of a form from $\mathbb{T}^s$.

We are going to use the same approach as in the previous section. Consider the complex:
\begin{equation*}
K_\theta^{\bullet}: \qquad \qquad \ldots \xrightarrow{d_\theta} \Omega^p(X) \xrightarrow{d_\theta} \Omega^{p+1}(X) \xrightarrow{d_\theta} \ldots
\end{equation*}
which we endow with the same descending filtration as before:
\begin{equation*}
F^pK_\theta^{p+q}:= \{\omega \in \Omega^{p+q}(X) \mid \iota_{X_{q+1}}\ldots\iota_{X_1}\omega =0, \forall X_1, X_2, \ldots X_{q+1} \in \mathcal{X}^v(X)\}.
\end{equation*}
It is easy to see that it is indeed a filtration, i.e. $d_{\theta}F^pK_\theta^{p+q} \subset F^{p}K_\theta^{p+q+1}$, as a consequence of $\theta$ being the pullback of a form from $\TT^s$. We study the spectral sequence associated to $K_\theta$ with this filtration, which we denote also by $E_{\bullet}$.

Again, we denote by ${\bf{\Omega^q(\mathbb{T}^n)}}$ and by ${\bf {H^q(\mathbb{T}^n)}}$ the vector bundles described in \eqref{fibrati1} and \eqref{fibrati2}, and as before we have the $0$-th page:
\begin{align*}
E_{0}^{p, q}=\frac{F^pK^{p+q}_\theta}{F^{p+1}K^{p+q}_\theta} \simeq \Omega^p(\mathbb{T}^s, {\bf{\Omega^q(\mathbb{T}^n)}})
\end{align*}
and via this isomorphism, $d_0:E_{0}^{p,q} \rightarrow E_{0}^{p, q+1}$ is given over a trivializing open set $U_\al$ by:
\begin{equation*}
d_0(\sum_i a_i \otimes b^\alpha_i) = \sum (-1)^p a_i \otimes d^v b^\alpha_i.
\end{equation*}
Thus, we again have:
\begin{equation*}
E_1^{p, q} \simeq \Omega^{p}(\mathbb{T}^s, {\bf {H^q(\mathbb{T}^n)}})
\end{equation*}
but this time, $d_1: E_1^{p, q} \rightarrow E_1^{p+1, q}$ is given over a trivializing open set $U_\al$ by:
\begin{equation*}
d_1(\sum a_i \otimes [b^\alpha_i]_{d^v}) =\sum d a_i \otimes [b^\alpha_i]_{d^v} + (-1)^p \sum a_i \wedge ([\nabla'b^\alpha_i]_{d^v} - \theta\otimes [b^\alpha_i]_{d^v})
\end{equation*}
where $\nabla'$ is the flat connection on ${\bf{H^q(\TT^n)}}$. Equivalently, if we see $\theta$ as a form on $\TT^s$ and define $L_\theta$ to be the complex flat line bundle over $\TT^s$ corresponding to $\exp[\theta]_{dR}\in H^1(\TT^s,\CC^*)\simeq \Hom(\pi_1(\TT^s),\CC^*)$, we have the following identification: 
\begin{align*}
E_1^{p,q} &\simeq \Omega^p(\mathbb{T}^s, L^{*}_\theta \otimes {\bf {H^q(\mathbb{T}^n)}})\\
d_1& =d^\nabla
\end{align*}
where $d^\nabla$ the differential operator induced by the corresponding flat connection on $L^{*}_\theta\otimes{\bf{H^q(\TT^n)}}$.
Thus we obtain the second page:
\begin{equation*}
E_2^{p, q} \simeq \frac{\Ker (d^{\nabla})^{p, q}}{\Im (d^{\nabla})^{p-1, q}} = H^p(\mathbb{T}^s, L^{*}_\theta \otimes \bf{H^q(\mathbb{T}^n)}).
\end{equation*}

Let $\theta=\sum_{k=1}^sa_k d\ln v_k$ with $a_k\in\CC$, which induces the representation $\rho^\theta=\sigma_1^{a_1}\otimes\cdots\otimes\sigma_s^{a_s}\in\Hom(\pi_1(\TT^s),\CC^*)$. 
The flat vector bundle $L^{*}_\theta\otimes{\bf{H^q(\TT^n)}}$ over $\TT^s$ is then given by the representation $[R]_\theta:U_\HH\rightarrow \Aut(H^q(\TT^n))$, $[R]_\theta:=(\rho^\theta)^{-1}\otimes[R]$.

We again have:
\begin{theorem}\label{propomain}
 The spectral sequence associated to $K^\bullet_\theta$ degenerates at the second page.
\end{theorem}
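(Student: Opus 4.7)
The plan is to follow the strategy of Section 4 by exhibiting $d_\theta$-closed representatives for every $E_2$ class. The splitting ${\bf H^q(\TT^n)} = \oplus_{I \in \mathcal{I}_q} L^*_{\sigma_I}$ from Fact 2, together with the identification $L^*_\theta \otimes L^*_{\sigma_I} = L^*_{\rho^\theta \otimes \sigma_I}$ of tensor products of flat line bundles, and \ref{isom}, give
\begin{equation*}
E_2^{p,q} \cong \bigoplus_{\substack{I \in \mathcal{I}_q \\ \rho^\theta \otimes \sigma_I \equiv 1}} H^p(\TT^s, \CC) \otimes e_I.
\end{equation*}

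The main step is producing an explicit lift of each generator to a $d_\theta$-closed form on $X$. Setting $v^a := v_1^{a_1}\cdots v_s^{a_s}$, which is a globally defined smooth function on $\HH^s$ since each $v_k > 0$, the transformation rule $u^* v_k = \sigma_k(u) v_k$ yields
\begin{equation*}
u^*(v^a e_I) = \rho^\theta(u)\, \sigma_I(u)\, v^a e_I,
\end{equation*}
so whenever $\rho^\theta \otimes \sigma_I \equiv 1$ the form $v^a e_I$ is $U$-invariant and descends to $X$. A direct computation using $d v^a = v^a \theta$ (which follows from $\theta = \sum_k a_k d\ln v_k$) and $d e_I = 0$ gives $d(v^a e_I) = \theta \wedge v^a e_I$, hence $d_\theta(v^a e_I) = 0$.

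An arbitrary class in $E_2^{p,q}$ then admits a representative of the form
\begin{equation*}
\eta = \sum_{\substack{I \in \mathcal{I}_q \\ \rho^\theta \otimes \sigma_I \equiv 1}} \alpha_I \wedge v^a e_I,
\end{equation*}
with each $\alpha_I \in \Omega^p(\TT^s)$ closed. The Leibniz rule for $d_\theta$, combined with $d\alpha_I = 0$ and the closedness of $v^a e_I$ under $d_\theta$ computed above, gives $d_\theta \eta = 0$. Applying the twisted analogue of formula \eqref{diffr}, since $\eta$ itself is already $d_\theta$-closed, the diagram-chasing correction terms $\xi_1, \ldots, \xi_{r-1}$ can all be chosen to vanish, so $d_r[\ldots[[\eta]_{d_0}]_{d_1}\ldots]_{d_{r-1}} = 0$ for every $r \geq 2$, proving degeneration at $E_2$. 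The only delicate bookkeeping is checking that tensor products of flat line bundles correspond to products of their defining characters, and that the multivalued expressions $v_k^{a_k}$ are single-valued on $\HH^s$ via $\exp(a_k \ln v_k)$; both are routine.
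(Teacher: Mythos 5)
Your argument is correct and follows the paper's proof in essentially the same way: both identify $E_2^{p,q}$ via the completely reducible splitting and \ref{isom}, and then represent every class by a form $\sum_I \alpha_I \wedge e_I$ (with $e_I$ the parallel frame of $L_\theta^*\otimes L_{\sigma_I}^*$) that is already $d_\theta$-closed, so all correction terms $\xi_i$ in \eqref{diffr} vanish. The only difference is that you make the parallel frame explicit as $v^a e_I$ and verify $d_\theta(v^a e_I)=0$ directly, whereas the paper expresses this more compactly as $d^\nabla e_I=0$ for the flat connection on $L_\theta^*\otimes L_{\sigma_I}^*$.
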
 

\begin{proof} As before, we want to show that $d_r \equiv 0$ for $r \geq 2$. We notice that, as ${\bf {H^q(\mathbb{T}^n)}}$ is a completely reducible local system, then so is $L^{*}_\theta \otimes {\bf {H^q(\mathbb{T}^n)}}$. The same arguments as in Fact 2 and Fact 3 in Section 3 show that we have an isomorphism:
\begin{equation*}
H^p(\mathbb{T}^s, L^*_\theta \otimes {\bf {H^q(\mathbb{T}^n)}})\cong\oplus_{I\in\mathcal{I}_q}H^p(\mathbb{T}^s,  L^*_\theta\otimes L^*_{\sigma_I})\cong\oplus_{\substack{I\in\mathcal{I}_{q}\\ \rho^\theta\otimes\sigma_I\equiv 1}} H^p(\TT^s,\CC)\otimes e_I
\end{equation*} 
where  $e_I$ is now identified with a global parallel frame of $L^*_\theta\otimes L^*_{\sigma_I}$. This means that any element $[[\eta]_{d^v}]_{d^\nabla}\in E^{p,q}_2$ can be represented, globally on $\TT^s$, by: 
\begin{equation*}
\eta=\sum_{\substack{I\in\mathcal{I}_q\\ \rho^\theta\otimes\sigma_I\equiv 1}}\al_I\otimes e_I\in \Omega^p(\TT^s, L_\theta^* \otimes {\bf{\Omega^q(\TT^s)}}),
\end{equation*} 
with $\al_I$ closed one forms on $\TT^s$. Since we have $d^\nabla e_I=0$ for any $I$, we obtain $d^\nabla\eta=0=d^v0$, so $d_2[[\eta]_{d^v}]_{d^\nabla}=[[-d^\nabla 0]_{d^v}]_{d^\nabla}=0$. Moreover,  by \eqref{diffr} and by the same arguments used to prove \ref{teorema}, each $\xi_1, \ldots, \xi_{r-1}$ can step by step be chosen to be 0 and thus $d_r \equiv 0$, for $r \geq 2$. We proved thus that $E_2=E_\infty$. 
\end{proof}

We proceed now with the proof of \ref{teoremaB}:
\begin{proof}
Since $E_r^{\bullet, \bullet}$ converges to $H^\bullet_\theta(X,\CC)$ and $E_2=E_\infty$, then 
\begin{equation*}
H^l_\theta(X,\CC) \cong \oplus_{p+q=l} E_2^{p, q} \cong\oplus_{\substack{p+q=l\\I\in\mathcal{I}_{q}\\ \rho^\theta\otimes\sigma_I\equiv 1}} H^p(\TT^s,\CC) \otimes e_I. 
\end{equation*}
Finally, in order to represent $H^l_\theta(X,\CC)$ by $U$ invariant forms on $\hat X$, we need to tensorize with a global frame  $\xi$ of $L_\theta$.  If $\theta=\sum_{k=1}^sa_kd\ln v_k$, then $\xi$ is given by  $\xi=\prod_{k=1}^sv_k^{a_k}$ on $\hat X$, and so the conclusion follows.

\end{proof}

\begin{remark} We want to draw attention to the fact that for both spectral sequences involved in our proofs, the isomorphism $E_2^{\bullet,\bullet}\cong H^\bullet(B)\otimes H^\bullet(F)$ alone was not enough to imply 
the degeneracy of $E_r^{\bullet, \bullet}$ at page $E_2$. An example of fiber bundle $F \rightarrow X \rightarrow B$  for which this isomorphism at the second page holds, but whose corresponding Leray-Serre spectral sequence does not degenerate at $E_2$ is 
given by the Hopf fibration $S^1 \rightarrow S^{2n+1} \rightarrow \C\mathbb{P}^n$.
\end{remark}

\section{Applications and Examples} 

Let us start this section by giving the immediate consequence of \ref{teoremaA}, which is the explicit cohomology of OT manifolds when there are no trivial representations other than the obvious ones:
\begin{corollary}\label{trivialI}
Let $(K,U)$ be a number field together with an admissible group of units $U\subset K$ so that $U$ admits no trivial representations $\sigma_I$ other than the ones corresponding to $I=\emptyset$ and $I=(1,2,\ldots, n)$, and let $X$ be the OT manifold associated to $(K,U)$. The Betti numbers of $X$ are:
\begin{align*}
b_l=b_{2m-l}&=\binom sl \text{ for } 0\leq l\leq s\\
b_l&=0 \text{ for } s<l<n.
\end{align*}
\end{corollary}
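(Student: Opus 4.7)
The plan is to directly apply \ref{teoremaA}, which expresses the Betti numbers as
\begin{equation*}
b_l = \sum_{p+q=l} \binom{s}{p}\,\rho_q,
\end{equation*}
with $\rho_q = \#\{I \in \mathcal{I}_q : \sigma_I \equiv 1\}$, and then to show that under the hypothesis only two values of $q$ contribute, namely $q=0$ and $q=n$.

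First I would observe that $\sigma_\emptyset$ is, by convention, the trivial (empty product) representation, so $I = \emptyset$ always contributes, giving $\rho_0 = 1$. Next I would verify that the top multi-index $I_0 = (1,2,\ldots,n)$ also gives $\sigma_{I_0} \equiv 1$. Indeed, for any $u \in U \subset O_K^{*,+}$ one has
\begin{equation*}
\sigma_{I_0}(u) = \prod_{i=1}^{s}\sigma_i(u) \cdot \prod_{j=1}^{t}\sigma_{s+j}(u)\,\overline{\sigma_{s+j}(u)} = \prod_{i=1}^{s}\sigma_i(u) \cdot \prod_{j=1}^{t}|\sigma_{s+j}(u)|^2,
\end{equation*}
which is strictly positive because $u$ is a positive unit. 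On the other hand this quantity equals the norm $N_{K/\Q}(u) = \pm 1$; the positivity forces $\sigma_{I_0}(u) = 1$. Hence $\rho_n = 1$ as well. By the hypothesis of the corollary, $\rho_q = 0$ for every $0 < q < n$.

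Substituting into the formula of \ref{teoremaA}, the only pairs $(p,q)$ with $p+q=l$, $0\leq p\leq s$, $0\leq q\leq n$ and $\rho_q \neq 0$ are $(l,0)$ when $0\leq l\leq s$ and $(l-n,n)$ when $n\leq l\leq n+s = 2m$. Thus for $0\leq l\leq s$ we obtain $b_l = \binom{s}{l}$, for $n\leq l\leq 2m$ we obtain $b_l = \binom{s}{l-n} = \binom{s}{2m-l}$, and for the intermediate range $s < l < n$ no term contributes, so $b_l = 0$. The equality $b_l = b_{2m-l}$ for $0\leq l\leq s$ is then immediate (and also follows from Poincaré duality).

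I do not expect any real obstacle here; the only subtle point is the verification that the top index $I_0 = (1,\ldots,n)$ always satisfies $\sigma_{I_0} \equiv 1$, which is what allows the Betti numbers in the top degrees to be nonzero and makes the statement consistent with Poincaré duality.
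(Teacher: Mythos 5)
Your proof is correct and follows exactly the route the paper intends: substitute into \ref{teoremaA} and note that only $q=0$ and $q=n$ contribute. The one extra detail you supply — that $\sigma_{(1,\ldots,n)}(u)=N_{K/\Q}(u)=1$ because $u$ is a positive unit — is indeed the right check for why the top-degree contribution is nonzero, and it is correct (the norm of a unit is $\pm 1$, and the positivity of the real embeddings together with $|\sigma_{s+j}(u)|^2>0$ forces the $+$ sign).
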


\begin{corollary}
For an OT manifold of type $(s,t)$, all Betti numbers $b_l$  for $0\leq l\leq s$ and for $2m-s\leq l\leq 2m$ are positive. 
\end{corollary}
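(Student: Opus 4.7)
The plan is to invoke the explicit Betti number formula from Theorem A, namely $b_l = \sum_{p+q=l}\binom{s}{p}\,\rho_q$ with $\rho_q = \#\{I \mid |I|=q,\ \sigma_I = 1\}$, and to exhibit in each of the two ranges a multi-index $I$ and a choice $(p,q)$ that gives a strictly positive contribution. The two candidates are the obvious ones: the empty multi-index $I = \emptyset$ for the low range, and the full multi-index $I_0 = (1,2,\ldots,n)$ for the high range.

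For $0 \leq l \leq s$, the empty multi-index has $|I| = 0$ and $\sigma_\emptyset$ is the empty product of characters, which is the trivial character; hence $\rho_0 \geq 1$. Taking $(p,q) = (l,0)$ in the Betti formula yields
\begin{equation*}
b_l \;\geq\; \binom{s}{l}\cdot \rho_0 \;\geq\; \binom{s}{l} \;\geq\; 1.
\end{equation*}
For $2m - s \leq l \leq 2m$, note that $2m - s = s + 2t = n$, so $q := n$ and $p := l - n$ lie in $\{0,\ldots,s\}$. I would check that $\sigma_{I_0} \equiv 1$ on $U$: for any $u \in U \subset O_K^{*,+}$,
\begin{equation*}
\sigma_{I_0}(u) \;=\; \prod_{i=1}^{n}\sigma_i(u) \;=\; \Bigl(\prod_{i=1}^{s}\sigma_i(u)\Bigr)\cdot\prod_{i=1}^{t}\bigl|\sigma_{s+i}(u)\bigr|^{2} \;=\; N_{K/\Q}(u),
\end{equation*}
where I used that the $t$ pairs of complex embeddings come as conjugates. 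Each factor on the left-hand side is strictly positive (the real ones by the defining condition of $O_K^{*,+}$, the complex ones as squared moduli), while the norm of a unit is $\pm 1$; hence $N_{K/\Q}(u) = 1$. Thus $\rho_n \geq 1$, and taking $(p,q) = (l-n,n)$ gives $b_l \geq \binom{s}{l-n} \geq 1$.

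A cleaner packaging is available: once positivity is established for $0 \leq l \leq s$ by the empty-index argument, positivity for $2m-s \leq l \leq 2m$ follows immediately from Poincar\'e duality $b_l = b_{2m-l}$, which applies because $X$ is a compact complex (hence canonically oriented) manifold of real dimension $2m$. I expect no genuine obstacle here; the only step that requires a moment's thought is verifying that $\sigma_{I_0}$ restricts trivially to $U$, which reduces to the elementary fact that totally positive units have norm $+1$.
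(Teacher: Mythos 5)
Your proof is correct and, in its ``cleaner packaging'' variant, is exactly the paper's proof: for $0\leq l\leq s$ take the $p=l$, $I=\emptyset$ summand to get $b_l\geq\binom{s}{l}>0$, then invoke Poincar\'e duality for the dual range. Your direct argument via $\sigma_{I_0}=N_{K/\Q}\equiv 1$ on the totally positive units is also valid and is a pleasant alternative, but it is not needed once duality is available.
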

\begin{proof}
For $0\leq l\leq s$, $H^l(\TT^s,\CC)$ is a summand of $H^l(X,\CC)$, corresponding to $p=l$ and $I=\emptyset$, so $\sigma_I\equiv 1$. Hence:
\begin{equation*}
b_l(X)\geq b_l(\TT^s)=\binom sl >0.
\end{equation*}
The assertion follows for $2m-s\leq l\leq 2m$ by the Poincar\'e duality. 
\end{proof}

We have computed the cohomology algebras of an OT manifold $X(K,U)$ in terms of numerical invariants associated to $(K,U)$, namely in terms of the trivial representations $\sigma_I$ of $U$. Clearly, if $(K,U)$ is not simple, in the sense that there exists an intermediate field extension $\Q\subset K'\subset K$ so that $U\subset K'$, then there exist trivial representations $\sigma_I:U\rightarrow \CC^*$ with $0<|I|=[K':\Q]<[K:\Q]$. It would be interesting to know whether the converse is true, i.e. if $(K,U)$ is of simple type, is the set $\{I|\sigma_I:U\rightarrow \CC^*, \sigma_I\equiv1\}$ only formed by $\emptyset$ and $I=(1,\ldots, n)$? Let us note that in \cite[Proposition 2.3]{ot}, the second Betti number of an OT manifold of simple type was computed, and coincides with ours when there are no other trivial representations, implying an affirmative answer for the above question when $|I|=2$. We do not address this problem in the present article, but we give an example where the answer is affirmative,  allowing us to give the explicit Betti numbers of the corresponding manifold:

\begin{example}\label{exemplu} Let $p$ be any odd prime number and take the polynomial $f=X^p-2\in\Q[X]$. This polynomial has one real root $\sqrt[p]{2}$ and the complex roots $\sqrt[p]{2}\epsilon, \ldots, \sqrt[p]{2}\epsilon^{p-1}$, where $\epsilon$ is a $p$-th root of unity. Let $K=\Q(\sqrt[p]{2})$, which is of type $(1,\frac{p-1}{2})$. We notice first that $u=\sqrt[p]{2}-1$ is a unit of $O_K$ since its norm, which is the product of all the embeddings of $u$ in $\CC$,  is equal to $1$:
\begin{equation*}
(\sqrt[p]{2}-1) \ldots (\sqrt[p]{2}\epsilon^{p-1}-1) = (-1)^pf(1)=1.
\end{equation*} 
Since $u$ is also clearly positive, we can then take $U$ to be generated by $u$. Let then $X=X(K,U)$ be the corresponding OT manifold. We claim that there is no index $I$ with $p > |I| \geq 2$ and $\sigma_I \equiv 1$. By \ref{trivialI}, this will imply that the Betti numbers of $X$ will verify $b_0=b_{p+1}=b_1=b_{p}=1$ and $b_i=0$ for any $i \neq 0, 1, p, p+1$. 

 Let us assume by contradiction the existence of such $I = (1 \leq i_1 < \ldots < i_k \leq p)$, with $k<p$. For any $1 \leq j \leq p$, we denote by $\sigma_j$ the embedding of $K$ into $\CC$ mapping $\sqrt[p]{2}$ to $\sqrt[p]{2}\epsilon^{j-1}$. Then $\sigma_I \equiv 1$ rewrites as:
\begin{equation*}
(\sqrt[p]{2}\epsilon^{i_1-1}-1)(\sqrt[p]{2}\epsilon^{i_2-1}-1)\ldots(\sqrt[p]{2}\epsilon^{i_k-1}-1)=1,
\end{equation*}
equivalent to:
\begin{equation}\label{poly}
a_0\sqrt[p]{2^k}- a_1\sqrt[p]{2^{k-1}} + \ldots + (-1)^{k-1}a_{k-1}\sqrt[p]{2} + (-1)^k-1=0
\end{equation}
where $a_l=\sum_{j_1 < \ldots < j_l} \epsilon^{i_1 + \ldots + \widehat{i_{j_1}}+ \ldots + \widehat{i_{j_l}}+ \ldots i_k-k+l}$, and the symbol $\hat{\cdot}$ over an element marks the fact that the element is missing. Let $g$ be the polynomial: 
\begin{equation*}
g=a_0X^k - a_1X^{k-1}+\ldots+(-1)^{k-1}a_{k-1}X+(-1)^k-1 \in \Q(\epsilon)[X].
\end{equation*} 
Then \eqref{poly} implies $g(\sqrt[p]{2})=0$, hence $g$ is a multiple of the minimal polynomial of $\sqrt[p]{2}$ over the field $\Q(\epsilon)$. We prove next that this polynomial is actually $X^p-2$. Indeed, we have the following two intermediate extensions:
\begin{equation*}
\Q \subset \Q(\epsilon) \subset \Q(\epsilon, \sqrt[p]{2})
\end{equation*}
\begin{equation*}
\Q \subset \Q(\sqrt[p]{2}) \subset \Q(\epsilon, \sqrt[p]{2}).
\end{equation*}
We thus get 
\begin{equation}\label{produs}
[\Q(\epsilon, \sqrt[p]{2}) : \Q] = [\Q(\epsilon, \sqrt[p]{2}) : \Q(\epsilon)] \cdot [\Q(\epsilon) : \Q] = [\Q(\epsilon, \sqrt[p]{2}) :\Q(\sqrt[p]{2})] \cdot [\Q(\sqrt[p]{2}) : \Q].
\end{equation}
Since $X^p-2 \in \Q(\epsilon)[X]$, we have $[\Q(\epsilon, \sqrt[p]{2}) : \Q(\epsilon)] \leq p$. 
In general, if $\epsilon$ is an $n$-th root of unity, $[\Q(\epsilon) : \Q]=\phi(n)$, where $\phi(n)$ is Euler's function. In our case $\phi(p)=p-1$, whence \eqref{produs} implies:
\begin{equation*}  
[\Q(\epsilon, \sqrt[p]{2}) : \Q] =(p-1)[\Q(\epsilon, \sqrt[p]{2}) : \Q(\epsilon)]=p\cdot[\Q(\epsilon, \sqrt[p]{2}) :\Q(\sqrt[p]{2})].
\end{equation*}
As $p$ and $p-1$ are relatively prime, we get moreover that $p$ divides $ [\Q(\epsilon, \sqrt[p]{2}) : \Q(\epsilon)]$, therefore $p=[\Q(\epsilon, \sqrt[p]{2}) : \Q(\epsilon)]$.  Thus the minimal polynomial of $\sqrt[p]{2}$ over the field $\Q(\epsilon)$ is $X^p-2$, contradicting the fact that $k=\deg g< p$.
\end{example}

\hfill

We can also obtain, via \ref{trivialI}, the explicit de Rham cohomology algebra of OT manifolds of type $(s,1)$. These manifolds are known to admit \textit{locally conformally \Ka} (LCK) metrics, which by definition are Hermitian metrics induced by \Ka\ metrics on the universal cover on which the fundamental group acts by homotheties. Equivalently, they are Hermitian metrics whose fundamental form $\Omega$ verifies $d\Omega=\theta\wedge\Omega$, where $\theta$ is a closed real one-form on the manifold, called \textit{the Lee form}. Such metrics were constructed in \cite{ot} on all OT manifolds of type $(s,1)$, but in general it is still an open problem to decide whether LCK metrics might exist on other types of OT manifolds. The existence of an LCK metric translates into a condition on the numerical data $(K,U)$: if $X(K,U)$ admits an LCK metric, then, by \cite[Proposition 2.9]{ot}, for any $u\in U$ we have:
\begin{equation}\label{LCK}
r(u)^2:=|\sigma_{s+1}(u)|^2=\ldots=|\sigma_{s+t}(u)|^2=(\sigma_{1}(u)\cdots\sigma_{s}(u))^{-1/t}.
\end{equation}
In the appendix to \cite{d} of L. Battisti, it was shown in Theorem 8, p.271, that \eqref{LCK} is also a sufficient condition for an LCK metric to exist. 

\begin{proposition}\label{LCKbun}
Let $X$ be an OT manifold of type $(s,t)$ admitting some LCK metric. Its de Rham cohomology algebra $H^\bullet(X,\CC)$ is isomorphic to the graded algebra over $\CC$ generated by: 
\begin{equation*}
d\ln v_1,\ldots, d\ln v_s, dz_1\wedge d\cz_1\wedge\ldots dz_t\wedge d\cz_t\wedge dr_1\wedge\ldots\wedge dr_s.
\end{equation*}
In particular, its Betti numbers are:
\begin{align*}
b_l=b_{2m-l}&=\binom sl \text{ for } 0\leq l\leq s\\
b_l&=0 \text{ for } s<l<n.
\end{align*}
\end{proposition}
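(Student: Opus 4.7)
The plan is to deduce the proposition directly from \ref{teoremaA}, or equivalently from \ref{trivialI}: the task reduces to showing that under the LCK hypothesis \eqref{LCK}, the only multi-indices $I \subset \{1, \ldots, n\}$ with $\sigma_I \equiv 1$ on $U$ are $I = \emptyset$ and $I = (1, \ldots, n)$. Once this is in hand, the decomposition in \ref{teoremaA} will receive contributions only from these two indices, producing precisely the generators $d\ln v_1, \ldots, d\ln v_s$ wedged either with $e_\emptyset = 1$ or with $e_{(1,\ldots,n)} = \pm\, dr_1 \wedge \ldots \wedge dr_s \wedge dz_1 \wedge d\bar z_1 \wedge \ldots \wedge dz_t \wedge d\bar z_t$.

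To identify the trivial $\sigma_I$, I would split each $I$ along the three natural blocks as $I = I_1 \sqcup I_2 \sqcup I_3$ with $I_1 \subset \{1,\ldots,s\}$, $I_2 \subset \{s+1,\ldots,s+t\}$, and $I_3 \subset \{s+t+1,\ldots,n\}$. Since $\sigma_i(u) > 0$ for $i \le s$ and since \eqref{LCK} forces $|\sigma_{s+i}(u)| = r(u) := (\sigma_1(u)\cdots\sigma_s(u))^{-1/(2t)}$, taking the modulus of $\sigma_I(u) = 1$ and applying logarithms turns the identity into
\begin{equation*}
\sum_{j \in I_1} \ln \sigma_j(u) \;=\; \frac{|I_2|+|I_3|}{2t}\sum_{j=1}^s \ln \sigma_j(u),
\end{equation*}
which must hold for every $u \in U$. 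By the admissibility of $U$, the map $u \mapsto (\ln\sigma_1(u), \ldots, \ln\sigma_s(u))$ sends $U$ to a lattice of full rank $s$ in $\RR^s$, so the $\RR$-linear form on both sides of this identity must vanish identically. For each $j$ the coefficient of $\ln\sigma_j(u)$ must therefore be zero, which reads $1 - \tfrac{|I_2|+|I_3|}{2t} = 0$ when $j \in I_1$ and $-\tfrac{|I_2|+|I_3|}{2t} = 0$ when $j \notin I_1$. These conditions are compatible only when $I_1 = \emptyset$ with $|I_2|+|I_3|=0$, giving $I = \emptyset$, or $I_1 = \{1,\ldots, s\}$ with $|I_2|+|I_3| = 2t$, giving $I = (1,\ldots,n)$.

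The converse inclusion is immediate: $\sigma_\emptyset \equiv 1$, and $\sigma_{(1,\ldots,n)}(u) = N_{K/\Q}(u)$ equals $+1$ because $u$ is a unit whose norm is positive (each $\sigma_i(u) > 0$ for $i \le s$ and each conjugate pair contributes $\sigma_{s+i}(u)\sigma_{s+t+i}(u) = |\sigma_{s+i}(u)|^2 > 0$). Applying \ref{trivialI} then yields the stated Betti numbers, and \ref{teoremaA} furnishes the algebra presentation in terms of the listed generators. The main substantive point to handle carefully is the rank assertion that underlies the vanishing of the $\RR$-linear form; everything else reduces to unpacking \eqref{LCK} or invoking previously established results.
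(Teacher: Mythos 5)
Your argument is correct and matches the paper's own proof in all essentials: both reduce the proposition to \ref{trivialI} by showing there are no nontrivial trivial representations $\sigma_I$, both take moduli and combine with \eqref{LCK} to get a multiplicative relation among $\sigma_1,\ldots,\sigma_s$, and both conclude by the $\RR$-linear independence of $\ln\sigma_1,\ldots,\ln\sigma_s$ on $U$ (your ``full rank lattice'' phrasing is the same fact). Your write-up is slightly more explicit in spelling out the coefficient conditions and in verifying the converse $\sigma_{(1,\ldots,n)}\equiv 1$ via positivity of the norm, but these are points the paper takes for granted rather than a genuinely different route.
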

\begin{proof}
By \ref{trivialI}, it suffices to show that $U$ admits no trivial representations $\sigma_I$ other than the two obvious ones. So let $I=(0<i_1<\ldots<i_k\leq n)$ with $k>0$ and $\sigma_I \equiv 1$. After eventually renumbering the coordinates, we can suppose without loss of generality that $I$ is of the form 
\begin{equation*}
I=(1,\ldots q, j_1,\ldots j_p, s+t+1,\ldots s+t+l),
\end{equation*} 
with $0\leq q\leq s<j_1<\ldots<j_p\leq s+t$ and $0\leq p,l\leq t$.

Since $\sigma_I=1$ we have $|\sigma_I|=1$ which, together with \eqref{LCK}, gives the relation:
\begin{equation*}
(\sigma_{1}\cdots\sigma_{q})^{-1}=r^{l+p}=(\sigma_1\cdots\sigma_s)^{-\frac{l+p}{2t}}.
\end{equation*}
As $\sigma_1,\ldots, \sigma_s$ are $\RR$-linearly independent, this relation must be the trivial one, implying that $l+p=2t$ and $q=s$, so $I=(1,\ldots, n)$, which finishes the proof. 
\end{proof}

In LCK geometry, it is interesting to know also the  twisted  cohomology with respect to the Lee form of the LCK metric. For the OT manifolds, we first determine the set of all possible de Rham classes of Lee forms of LCK metrics, then compute the corresponding  twisted  cohomology. The result that follows generalizes the result in \cite{alex}, where it is proven that the set of possible Lee classes of LCK metrics on Inoue surfaces of type $\mathcal{S}^0$, namely OT-manifolds of type $(1, 1)$, has only one element.

\begin{proposition}\label{unic}
Let $X=X(K,U)$ be an OT manifold of type $(s,t)$. There exists at most one Lee class of an LCK metric on $X$, namely the one represented by the $U\ltimes O_K$-invariant form on $\HH^s\times\CC^t$, $\theta=\frac{1}{t}d\ln (\prod_{k=1}^sv_k)$.
\end{proposition}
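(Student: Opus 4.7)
The plan is to reduce to a canonical representative of the Lee class, then use the LCK equation together with the $\Gamma$-equivariance of the K\"ahler cover to pin down all the coefficients.

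I would first use \ref{teoremaA} to write any Lee class as $[\theta] = \sum_{k=1}^s a_k [d\ln v_k]$ with $a_k \in \RR$. Since a conformal rescaling $\Omega \mapsto e^f\Omega$ of the LCK metric changes the Lee form by the exact term $df$, one may assume without loss of generality that $\theta = \sum_{k=1}^s a_k\, d\ln v_k$ on the nose. Then lift $\Omega$ to a $\Gamma$-invariant Hermitian form $\tilde\Omega = \frac{i}{2}\sum h_{jk} dW_j \wedge d\bar W_k$ on $\tilde X = \HH^s \times \CC^t$, with coordinates $(W_1,\dots,W_m) = (w_1,\dots,w_s,z_1,\dots,z_t)$ and $(h_{jk})$ positive Hermitian.

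The heart of the argument is to read off the $a_k$ from the purely $\CC^t$-diagonal components of $\tilde\Omega$. I would compare the coefficients of $dv_k\wedge dz_l\wedge d\bar z_l$ on both sides of $d\tilde\Omega = \tilde\theta\wedge\tilde\Omega$, with contributions coming from the diagonal term $h_{s+l,s+l}\, dz_l\wedge d\bar z_l$ together with the mixed terms $h_{k,s+l}\,dw_k\wedge d\bar z_l$ and $h_{s+l,k}\,dz_l\wedge d\bar w_k$. Using $h_{s+l,k} = \overline{h_{k,s+l}}$ to collapse the mixed contributions, this yields the PDE
\begin{equation*}
\frac{\partial h_{s+l,s+l}}{\partial v_k} + 2\,\Im\!\left(\frac{\partial h_{k,s+l}}{\partial z_l}\right) = \frac{a_k}{v_k} h_{s+l,s+l}
\end{equation*}
for every $k \in \{1,\dots,s\}$ and $l \in \{1,\dots,t\}$. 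Next I would use the $O_K$-invariance of the $h_{jk}$ to view them as functions on the trivial $\TT^n$-bundle $\hat X = (\RR_{>0})^s \times \TT^n$; averaging the PDE over the compact fiber $\TT^n$ kills the cross-term by Stokes' theorem, producing the ODE system $\partial_{v_k}\bar h_{s+l,s+l}(v) = (a_k/v_k)\bar h_{s+l,s+l}(v)$, which integrates to $\bar h_{s+l,s+l}(v) = C_l\prod_k v_k^{a_k}$ for a positive constant $C_l$.

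To conclude, I would combine this with the $U$-equivariance $h_{s+l,s+l}(u\cdot W)|\sigma_{s+l}(u)|^2 = h_{s+l,s+l}(W)$ coming from $\Gamma$-invariance of $\tilde\Omega$, together with Battisti's LCK-existence relation \eqref{LCK} which gives $|\sigma_{s+l}(u)|^{-2} = \prod_k\sigma_k(u)^{1/t}$. Averaging this equivariance over the fiber (which commutes with the $U$-action since positive units act on $\TT^n$ with unit Jacobian) and comparing with the explicit formula for $\bar h_{s+l,s+l}$ yields $\prod_k\sigma_k(u)^{a_k - 1/t} = 1$ for every $u \in U$. Since the map $u \mapsto (\ln\sigma_1(u),\dots,\ln\sigma_s(u))$ embeds $U$ as a rank-$s$ lattice in $\RR^s$, this forces $a_k = 1/t$ for every $k$, giving $\theta = \frac{1}{t} d\ln(v_1\cdots v_s)$. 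The delicate step is the sign-tracking in the PDE derivation (in particular the mixed blocks with $dw_k = dr_k + i\,dv_k$), but after the averaging step everything collapses to a clean ODE and the arithmetic condition \eqref{LCK} does the rest.
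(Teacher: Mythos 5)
Your proposal is correct, and it reaches the conclusion by a genuinely different route than the paper. The paper passes to the \Ka\ form $\Omega_K=\e^{-\phi}\Omega$ on $\tilde X$ and reformulates the statement as identifying the automorphy morphism $\tau\colon\pi_1(X)\to\RR$ determined by $\gamma^*\Omega_K=\e^{-\tau(\gamma)}\Omega_K$; after noting that $O_K$ acts by isometries it averages $\Omega_K$ over $\TT$, and then uses the splitting $\bigwedge^2_{\tilde X}=\bigwedge^2_{\CC^t}\oplus(\bigwedge^1_{\CC^t}\otimes\bigwedge^1_{\HH^s})\oplus\bigwedge^2_{\HH^s}$: $\TT$-invariance gives $d_0\Omega'_K=0$, closedness forces the $\bigwedge^2_{\CC^t}$-block to have \emph{constant} coefficients $f_{ij}$, and positivity ($f_{ii}\neq 0$) together with $u^*\Omega'_K=\e^{-\tau(u)}\Omega'_K$ and the unit-norm relation $\prod_k\sigma_k\prod_j|\sigma_{s+j}|^2=1$ pins down $\tau=\tau_\theta$. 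You instead stay with the LCK form $\Omega$ itself: you read off a scalar PDE for $h_{s+l,s+l}$ directly from $d\Omega=\theta\wedge\Omega$, average over $\TT^n$ (legitimate, since $\Phi_u$ has unit Jacobian) to kill the cross-term, integrate the resulting ODE in the $v$-variables, and then impose $U$-equivariance together with Battisti's relation \eqref{LCK} and the lattice embedding $j\colon U\hookrightarrow(\RR_{>0})^s$. I checked your PDE and it is correct. The trade-off: your computation is more coordinate-based, and it relies on \eqref{LCK} as an external ingredient, whereas the paper's argument is self-contained (it re-derives $|\sigma_{s+1}(u)|=\cdots=|\sigma_{s+t}(u)|$ from $f_{ii}\neq 0$ as a by-product); on the other hand you avoid introducing the automorphy formalism and work with the LCK equation directly. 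Both proofs hinge on the same three structural facts: $O_K$-invariance allows descent to $\hat X$, averaging over $\TT$ preserves the relevant structure, and the arithmetic of $\sigma_1,\ldots,\sigma_s$ (rank-$s$ lattice) rigidifies the constants. One small point worth making explicit in a write-up: you need $C_l>0$, which follows because $\bar h_{s+l,s+l}$ is the fibre average of a strictly positive function.
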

\begin{proof}
First note that, as $H^1(X,\RR)\cong\Hom(\pi_1(X),\RR)$, we can identify a de Rham class $[\eta]_{dR}$ with a group morphism $\tau:\pi_1(X)\rightarrow \RR$. The corresponding morphism $\tau$ is precisely the automorphy representation: if $\eta=d\phi$ on the universal cover $\tilde X$, then $\tau$ is given by $\tau(\gamma)=\gamma^*\phi-\phi$, for any $\gamma\in\pi_1(X)$.

Moreover, if $X$ admits some LCK metric $(\Omega,\eta)$ with $\eta=d\phi$ on $\tilde X$, and if $\Omega_K:=\e^{-\phi}\Omega$ is the corresponding \Ka\ form on $\tilde X$, then $\tau=\tau_{[\eta]}$ is also determined by: $\gamma^*\Omega_K=\e^{-\tau(\gamma)}\Omega_K$ for any $\gamma\in\pi_1(X)$. Hence, it suffices to show that for any \Ka\ metric $\Omega_K$ on $\tilde X$ inducing an LCK metric on $X$, the automorphy representation determined by $\Omega_K$ is precisely the one corresponding to $\theta$, namely:
\begin{align*}\label{rho}
\tau_\theta(a)&=0  \text{ for } a\in O_K \\
\tau_\theta(u)&=\frac{1}{t}\sum_{k=1}^s \ln \sigma_k(u)  \text { for } u\in U.
\end{align*}

Let now $\Omega_K$ be a \Ka\ metric on $\tilde X$ on which $\pi_1(X)$ acts by homotheties, and denote by $\tau$ the corresponding representation described before. We recall that under the abelianization morphism $U\ltimes O_K\rightarrow H_1(X,\ZZ)$, $O_K$ maps to a finite group. This implies that $O_K$ will act by isometries on $\Omega_K$. Hence, the form $\Omega_K$ descends to the manifold $\hat X:=\HH^s\times \CC^t/O_K$. But the torus $\TT:=\RR^{2t+s}/O_K$ acts holomorphically by translations on $\hat X$, so we can average $\Omega_K$ over $\TT$ to get a new $\TT$-invariant \Ka\ form on $\hat X$:
\begin{equation*}
\Omega'_K:=\int_{\TT}a^*\Omega_K\mu(a)
\end{equation*}
where $\mu$ is the constant volume form on $\TT$ with $\int_\TT\mu=1$. The automorphy of $\Omega'_K$ is also $\tau$, as for any $u\in U$ we have:
\begin{equation*}
u^*\Omega_K'=\int_\TT(au)^*\Omega_K\mu(a)=\int_\TT(uc_u(a))^*\Omega_K\mu(a)=\int_\TT c^*_u(a)(\e^{-\tau(u)}\Omega_K)\mu(c_u(a))=\e^{-\tau(u)}\Omega_K'.
\end{equation*}

Now  write $\Omega_K'=\Omega_0+\Omega_{01}+\Omega_1$ with respect to the splitting 
\begin{equation}\label{splitt}
\textstyle\bigwedge^2_{\tilde X}=\textstyle\bigwedge^{2}_{\CC^t}\oplus (\textstyle\bigwedge^1_{\CC^t}\otimes \textstyle\bigwedge^1_{\HH^s})\oplus\textstyle\bigwedge^2_{\HH^s}
\end{equation}
and also split $d=d_0+d_1$, with $d_0$ being the differentiation with respect to the $\CC^t$-variables and $d_1$, the $\HH^s$-variables. The $\CC^t$-invariance of $\Omega'_K$ implies that $d_0\Omega_K'=0$. The condition $d\Omega_K'=0$ then gives, on the $\bigwedge^2_{\CC^t}\otimes \bigwedge^1_{\HH^s}$-component, $d_1\Omega_0=0$. So $\Omega_0=\sum_{ij} f_{ij}dz_i\wedge d\cz_j$, with $f_{ij}\in\CC$ for any $1\leq i,j \leq t$. 

Now, if $u\in U$, $u^*\Omega'_K=\e^{-\tau(u)}\Omega'_K$ implies that:
\begin{equation*}
f_{ij}\sigma_{s+i}(u)\ov{\sigma_{s+j}(u)}=f_{ij}\e^{-\tau(u)} \text{ for any } 1 \leq i,j \leq t.
\end{equation*} 

In particular, since $f_{ii}\neq 0$ for any $1 \leq i \leq t$, we have: $\tau(u)=-\ln|\sigma_{s+1}(u)|^2=\ldots=-\ln|\sigma_{s+t}(u)|^2$. But we also have $\prod_{k=1}^{s}\sigma_k\prod_{j=s+1}^{s+t}|\sigma_j|^2=1$. This implies that $\tau=\tau_\theta$, and the conclusion follows.
\end{proof}

This last result has an immediate corollary concerning the stability of LCK metrics on OT manifolds, as studied by R. Goto in \cite{g}. In order to state it, let us first note that, given an LCK structure $(\Omega,\theta)$ on $X$, we have an associated flat line bundle $(L, \nabla)=L_{[\theta]}$, and $\Omega$ can be seen as a $d^\nabla$-closed section of $\Omega^2_X\otimes L^*$. Conversely, given a flat line bundle $(L,\nabla)$ over $X$, an $L$-valued positive $(1,1)$-form which is $d^\nabla$-closed induces an LCK structure on $X$.

\begin{corollary}\label{stab}
On an OT manifold of LCK type, the LCK structure is not stable under small deformations of flat line bundles. More specifically, if $(\Omega, L, \nabla)$ is an LCK structure on an OT manifold $X$, $\epsilon>0$ and $\{L_v\}$ is a non-trivial analytic deformation of flat line bundles for $|v|<|\epsilon|$ with $L_0=L$, then for any $0<|v|<\epsilon$,  there are no $L_v$-valued LCK structures.
\end{corollary}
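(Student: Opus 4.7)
The plan is to reduce the corollary essentially to the uniqueness statement of Proposition 6.4. Recall from the paragraph preceding the corollary that any LCK structure $(\Omega, \theta)$ on $X$ has an associated flat line bundle $L_{[\theta]}$, determined by the class $\exp[\theta]_{dR}\in H^1(X,\CC^*)$, and conversely an LCK structure taking values in a prescribed flat line bundle $L'$ has Lee form whose class corresponds to $L'$ under the exponential map. In particular, two LCK structures on $X$ have isomorphic associated flat line bundles if and only if they have the same Lee class.

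I would argue by contradiction: assume that for some $v_0$ with $0<|v_0|<\epsilon$ there exists an $L_{v_0}$-valued LCK structure, with Lee form $\theta_{v_0}$. Then $L_{v_0}\cong L_{[\theta_{v_0}]}$ as flat line bundles. By \ref{unic}, \emph{every} Lee class of an LCK metric on $X$ coincides with the fixed real class $[\theta_0]_{dR}\in H^1(X,\RR)$ given by the form $\tfrac{1}{t}d\ln(v_1\cdots v_s)$. Applying the exponential map yields
\begin{equation*}
[L_{v_0}] \;=\; \exp[\theta_{v_0}]_{dR} \;=\; \exp[\theta_0]_{dR} \;=\; [L_0]
\end{equation*}
in $H^1(X,\CC^*)\cong\Hom(\pi_1(X),\CC^*)$, i.e. $L_{v_0}\cong L_0$ as flat line bundles.

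Finally, I would invoke the non-triviality hypothesis on the analytic family $\{L_v\}_{|v|<\epsilon}$. This means the classifying map $v\mapsto [L_v]\in H^1(X,\CC^*)$ is a non-constant analytic function of the parameter $v$. Its fiber over the single point $[L_0]$ is therefore a discrete subset of the disk $\{|v|<\epsilon\}$; after shrinking $\epsilon$ if necessary, this fiber reduces to $\{0\}$. Combined with the previous step, this contradicts the existence of $v_0\neq 0$ as above, and proves the corollary.

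I do not foresee a serious technical obstacle: the heavy lifting has already been done in \ref{unic}. The one point that deserves care is the clean formulation of the dictionary between LCK structures and their associated flat line bundles, ensuring that the uniqueness of the Lee \emph{class} (rather than just the Lee \emph{form}) is what is needed, and that passing from the real class in $H^1(X,\RR)$ to the class in $H^1(X,\CC^*)$ through $\exp$ is indeed injective modulo the non-triviality assumption on the deformation.
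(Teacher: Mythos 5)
Your proposal is correct and coincides with the (unstated) proof the paper has in mind: the corollary is presented as an immediate consequence of \ref{unic}, and your argument spells out exactly why. The only caveat worth flagging is that, under the minimal reading of ``non-trivial analytic deformation'' (merely non-constant near $v=0$), the fiber $\{v:\, L_v\cong L_0\}$ could a priori contain isolated nonzero points, so the conclusion as literally stated (``for any $0<|v|<\epsilon$'') silently presupposes either that the deformation parametrizes distinct bundles or that $\epsilon$ has been shrunk -- a point you correctly identify; note also that your reduction from Lee classes to flat line bundles uses that $\exp$ is injective on $H^1(X,\RR)\subset H^1(X,\CC)$, which holds since $H^1(X,\RR)\cap H^1(X,2\pi i\ZZ)=\{0\}$, a fact you are implicitly relying on and would be worth stating explicitly.
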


Next we compute the explicit  twisted  cohomology groups with respect to the Lee form:

\begin{proposition}\label{twistedLCK} Let $X$  be an OT manifold of type $(s, t)$ admitting an LCK metric and let $\theta = \frac{1}{t}\sum_{k=1}^s d\ln v_k$. Then for any $0\leq l\leq 2m$, we have: 
\begin{equation*}
H^l_\theta(X)\cong(v_1 \cdots v_s)^{\frac{1}{t}}\oplus_{j=1}^t\CC dz_j\wedge d\cz_j\otimes\textstyle\bigwedge^{l-2}\CC\{d\ln v_1,\ldots,d\ln v_s\}.
\end{equation*}
In particular, the corresponding twisted Betti numbers are given by $\dim_\CC H^l_\theta(X)=t\binom s{l-2}$ for any $0\leq l \leq 2m$. 

\end{proposition}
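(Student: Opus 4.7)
The plan is to specialize \ref{teoremaB} to the specific form $\theta=\tfrac{1}{t}\sum_{k=1}^{s}d\ln v_k$. With $a_k=1/t$ for every $k$, the induced representation of $U$ is $\rho^{\theta}(u)=(\sigma_1(u)\cdots\sigma_{s}(u))^{1/t}\in\R_{>0}$, well defined since $u\in O_K^{*,+}$, and the global $U$-equivariant trivializing section of $L_\theta$ on $\hat X$ is $\xi=(v_1\cdots v_s)^{1/t}$. Applying \ref{teoremaB} thus reduces the computation to classifying those multi-indices $I\in\mathcal{I}_q$ satisfying
\begin{equation*}
\sigma_I(u)=(\sigma_1(u)\cdots\sigma_s(u))^{-1/t}\qquad\text{for every }u\in U.
\end{equation*}

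I would then reuse the normalization from the proof of \ref{LCKbun}: up to reordering, $I=(1,\dots,q,\,s+j_1,\dots,s+j_p,\,s+t+1,\dots,s+t+l)$ with $0\le q\le s$ and $0\le p,l\le t$. Squaring moduli and substituting the LCK identity $|\sigma_{s+j}|^{2}=(\sigma_1\cdots\sigma_s)^{-1/t}$ from \eqref{LCK} transforms the required equation into
\begin{equation*}
\prod_{i=1}^{q}\sigma_i^{\,2}=(\sigma_1\cdots\sigma_s)^{(p+l-2)/t}\quad\text{on }U.
\end{equation*}
The $\R$-linear independence of $\log\sigma_1,\dots,\log\sigma_s$ on $U$ (the very condition making $j(U)$ a lattice in $(\R_{>0})^{s}$) then forces $q=0$ and $p+l=2$, narrowing the candidates to $(p,l)\in\{(2,0),(1,1),(0,2)\}$.

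It remains to upgrade this modulus match to a genuine complex equality in each of the three subcases. The conjugate-diagonal indices $I_j=\{s+j,\,s+t+j\}$, $1\le j\le t$, always satisfy the requirement, since $\sigma_{I_j}=|\sigma_{s+j}|^{2}=(\sigma_1\cdots\sigma_s)^{-1/t}$ by LCK; as $e_{I_j}=dz_j\wedge d\bar z_j$, each of these produces exactly one of the $t$ summands appearing in the statement. In the remaining subcases, namely $I=\{s+j,s+k\}$ with $j<k$, its antiholomorphic mirror, or $I=\{s+j,s+t+m\}$ with $j\ne m$, the moduli already coincide by LCK, so the obstruction is purely phase-theoretic: one must rule out that a character like $\sigma_{s+j}\sigma_{s+k}$ or $\sigma_{s+j}/\sigma_{s+m}$ becomes positive real on $U$. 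Plugging the surviving $I_j$'s back into \ref{teoremaB} and tensoring with $\xi$ then yields the claimed decomposition together with $\dim_\CC H^l_\theta(X)=t\binom{s}{l-2}$.

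The main obstacle I anticipate is this last phase exclusion: the LCK hypothesis constrains only moduli, so the non-triviality of the residual $S^{1}$-valued characters of $U$ must come from a separate arithmetic ingredient. I would isolate it as a short lemma saying that for $j\ne m$ the character $\sigma_{s+j}/\sigma_{s+m}$ of $U$ is never positive real, and derive it from the admissibility of $U$ together with Dirichlet's unit theorem applied to $O_K^{*,+}$; failing a fully general argument, I would at least record the extra summands that would appear for a degenerate choice of $U$.
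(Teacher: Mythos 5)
Your setup and the first half of the argument match the paper exactly: applying \ref{teoremaB} with $a_k=1/t$, reducing to the classification of multi-indices $I$ with $\rho^\theta\otimes\sigma_I\equiv 1$, normalizing $I$ as in \ref{LCKbun}, and using moduli together with \eqref{LCK} and the $\R$-linear independence of $\sigma_1,\dots,\sigma_s$ on $U$ to force $q=0$ and $p+l=2$. This part is sound.

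The genuine gap is the phase exclusion, which you correctly identify as the crux but do not close. Your proposed route --- an arithmetic lemma derived from ``admissibility of $U$ together with Dirichlet's unit theorem'' --- is not carried out, and it is not clear it would work: admissibility of $U$ only controls the logarithms of $\sigma_1,\dots,\sigma_s$, and Dirichlet's theorem gives no direct information about the arguments of $\sigma_{s+1},\dots,\sigma_{s+t}$ restricted to $U$. The paper closes the gap differently, and with no extra hypothesis. Writing $I=(i_1<i_2)$ with $i_1>s$, the relation $\sigma_{i_1}\sigma_{i_2}=(\sigma_1\cdots\sigma_s)^{-1/t}$ is real, so $\sigma_{i_1}\sigma_{i_2}=\overline{\sigma_{i_1}}\,\overline{\sigma_{i_2}}$; combined with $|\sigma_{i_1}|=|\sigma_{i_2}|$ from \eqref{LCK} this gives $\sigma_{i_1}=\pm\overline{\sigma_{i_2}}$ on $U$, and the minus sign is excluded by positivity of $(\sigma_1\cdots\sigma_s)^{-1/t}$. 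Thus $\sigma_{i_2}=\overline{\sigma_{i_1}}=\sigma_{i_1+t}$ as functions on $U$. The decisive step is then an algebraic one: by \ref{LCKbun} the LCK hypothesis already implies that $U$ admits no nontrivial trivial representations $\sigma_I$, hence $(K,U)$ is of simple type, so the subring $M=\Z[U]\subset O_K$ has fraction field $K$ itself. The multiplicative-then-additive extension of the identity $\sigma_{i_2}=\sigma_{i_1+t}$ from $U$ to $M$ and then to $K$ forces $\sigma_{i_2}$ and $\sigma_{i_1+t}$ to coincide as embeddings of $K$, i.e.\ $i_2=i_1+t$. This simultaneously kills the $(p,l)\in\{(2,0),(0,2)\}$ cases (they would force $i_2>s+t$ or $i_1\leq s+t$ in contradiction with the placement of the indices) and pins the $(1,1)$ case down to the conjugate-diagonal indices $(s+j,\,s+t+j)$. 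You should replace the speculative Dirichlet argument by this simplicity argument, which is entirely self-contained given \ref{LCKbun}.
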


\begin{proof}
In order to apply \ref{teoremaB}, we need to identify, for any $0\leq k\leq n$, the set corresponding to $[\theta]_{dR}$:
\begin{equation*}
\mathcal{J}_k=\{I=(1 \leq i_1< i_2 \ldots < i_k\leq n) \mid \sigma_1^{\frac{1}{t}}\ldots \sigma^{\frac{1}{t}}_s\sigma_{i_1}\ldots\sigma_{i_k}=1\}.
\end{equation*}
We shall prove that $\mathcal J_k=\emptyset$ for $k\neq 2$ and $\mathcal J_2=\{(s+j,s+t+j)| 1 \leq j \leq t\}$.

Let us fix $k$ and $I\in\mathcal J_k$. As before, we can assume that $I$ is of the form:
\begin{equation*}
I=(1, \ldots, q, j_1, \ldots, j_p, s+t+1, \ldots, s+t+l)
\end{equation*}  
such that $0\leq q \leq s<j_1<\ldots<j_p\leq s+t$, $0\leq p,l \leq t$ and $q+p+l=k$. Then $|(\sigma_1\cdots\sigma_s)^{1/t}\sigma_I|=1$ together with \eqref{LCK} implies:
\begin{equation*}
\sigma_1^{\frac{1}{t}+1}\ldots\sigma_q^{\frac{1}{t}+1}\sigma_{q+1}^{\frac{1}{t}} \ldots \sigma_s^{\frac{1}{t}}=r^{-(p+l)}=(\sigma_1\cdots\sigma_s)^{\frac{p+l}{2t}}.
\end{equation*}

By the $\R$ linear independence of $\sigma_1, \ldots, \sigma_s$, this must be the trivial relation. If $0<q<s$, we would get that $\frac{1}{t}+1=\frac{p+l}{2t}=\frac{1}{t}$, which is a contradiction. If $q=s$, then we would get that $\frac{1}{t}+1=\frac{p+l}{2t}$, or also $p+l=2t+2$, contradicting the fact that $p+l\leq 2t$. Hence $q=0$ and we get the relation $\frac{1}{t}=\frac{p+l}{2t}$, or also $p+l=2=k$. In particular, $\mathcal J_k=\emptyset$ for $k\neq 2$.

Let us note now that the set $\{(s+j,s+t+j)|j=1,t\}$ is included in $\mathcal J_2$. In order to show that these are all the possible multi-indexes, let $I=(i_1<i_2)\in\mathcal J_2$. We already showed that $i_1 > s$. Since $\sigma_{i_1}\sigma_{i_2}=(\sigma_1\cdots\sigma_s)^{-1/t}$ is real, we get that $\sigma_{i_1}\sigma_{i_2}=\overline{\sigma_{i_1}}\overline{\sigma_{i_2}}$. Combining with $|\sigma_{i_1}|=|\sigma_{i_2}|$, we obtain $\sigma_{i_1}^2=\overline\sigma_{i_2}^2$, therefore $\sigma_{i_1} = \pm \overline{\sigma}_{i_2}$. The case $\sigma_{i_1}= -  \overline{\sigma}_{i_2}$ is excluded, because this would give the following contradiction:
\begin{equation*}
0 > -  \overline{\sigma}_{i_2} \sigma_{i_2} = (\sigma_1\ldots\sigma_s)^{-\frac{1}{t}} > 0. 
\end{equation*}

So $\sigma_{i_1}=\overline{\sigma}_{i_2}$. But there exists $s+t\leq j\leq s+2t$ with $|i_1-j|=t$ and $\sigma_{i_1}=\ov{\sigma}_j$, so $\sigma_{i_2}=\sigma_j$. We want to show that $i_2=j$, i.e. $I=(j-t,j)$. 

Consider $M$ the $\ZZ$-submodule of $O_K$ generated by $U$, which is a subring of $O_K$, and let $K'$ be its fraction field. We have $U\subset M\subset K'\subset K$, and we showed in the above proposition that $U$ has no trivial representations, so in particular $(K,U)$ is simple, thus $K'=K$. But the relation $\sigma_{i_2}=\sigma_j$ extends to $M$, and so also to $K'=K$. This last fact is possible only if $i_2=j$.
\end{proof}

\begin{remark} Notice that since $H^l_\theta(X)$ does not vanish and $\theta$ is real-valued, by the result of \cite{llmp} $\theta$ is not parallel with respect to any  metric $g$ on $X$.
\end{remark}

\begin{remark} In \cite{k}, OT manifolds are given a solvmanifold structure, namely they are shown to be of the form $\Gamma \setminus G$, where $G$ is a solvable Lie group and $\Gamma$ is a co-compact lattice in $G$. Consequently, one can consider the cohomologies $H^\bullet(\mathfrak{g})$ and $H^\bullet_\theta(\mathfrak{g})$, where $\mathfrak{g}$ is the Lie algebra of $G$ and $\theta$ is a closed $G$-invariant form. A natural question is then: does one have isomorphisms $H^\bullet_{dR}(X(K, U)) \cong H^\bullet(\mathfrak{g})$ and $H^\bullet_{\theta}(X(K, U)) \cong H_{\theta}^\bullet(\mathfrak{g})$? For a general solvmanifold, this does not always hold. However, H. Kasuya proved in \cite[Example 4]{k2} that on OT manifolds of type $(s,1)$, this isomorphism is valid for the de Rham cohomology. In \cite[Theorem 4.3]{aot}, it is proved that in the twisted cohomology,  the isomorphism holds for a subclass of $X(K, U)$ of type $(s, 1)$, satisfying the so-called {\em Mostow condition}. Finally, since in \ref{teoremaB} we represented the corresponding cohomologies by invariant forms with respect to the action of $G$ described in \cite{k}, we obtain as a consequence that for all OT manifolds $X$ of type $(s, t)$, we have the isomorphism $H^k_\theta(X) \cong H^k_\theta(\mathfrak{g})$, although they might not all satisfy the Mostow condition.
\end{remark}

In \cite{alex} it was proven that there are no $d_\theta$-exact metrics on OT manifolds of type $(s, 1)$. We give next a generalization of this result, in which we determine all the possible LCK classes in $H^2_\theta$. As a consequence of this, we obtain a hard Lefschetz-type theorem associated to an LCK metric on an OT manifold.

\begin{corollary}\label{LCKclass}
Let $X$ be an OT manifold of type $(s,t)$ with an LCK structure $(\Omega,\theta)$, where $\theta= \frac{1}{t}\sum_{k=1}^sd\ln v_k$. Then the  twisted  class of $\Omega$ in $H^2_\theta(X)$ is of the form: 
\begin{equation*}
(v_1\cdots v_s)^{\frac{1}{t}}\sum_{j=1}^ta_jidz_j\wedge d\ov z_j, \ \ \ a_j\in\RR_{>0} \ \forall j\in\{1,\ldots, t\}.
\end{equation*} 
In particular, if we let $\mathrm{Lef}_{\Omega}$ denote the Lefschetz operator $\mathrm{Lef}_\Omega=\Omega\wedge \cdot$, then for any $0\leq l\leq 2m-2$, $\mathrm{Lef}_\Omega$ induces a morphism in cohomology: 
\begin{equation*}
[\mathrm{Lef}_\Omega]:H^l(X,\CC)\rightarrow H^{l+2}_{\theta}(X)
\end{equation*}
which is injective for $0\leq l\leq m$ and surjective for $m\leq l\leq 2m-2$.
\end{corollary}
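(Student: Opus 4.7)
My strategy for determining $[\Omega]_\theta$ explicitly is to average $\Omega$ over the torus $\TT=\TT^n$ acting on $\hat X$, invoke the decomposition argument of Proposition~\ref{unic}, and deduce positivity of the coefficients from the \K\ property of $\Omega_K=e^{-\phi}\Omega$ on $\tilde X$, where $\phi=\frac{1}{t}\sum_k\ln v_k$ is a primitive of $\theta$. The Lefschetz assertion will then follow by explicit computation on the bases provided by Propositions~\ref{LCKbun} and~\ref{twistedLCK}.

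The first step is a twisted analogue of Lemma~\ref{Gpoincare}. Since $\theta$ is pulled back from $\TT^s$ and hence $\TT$-invariant, the averaging projection $\pi$ commutes with $d_\theta$, and the flow argument of Lemma~\ref{Gpoincare} adapts verbatim---the key point is that $\theta$ has no component along the torus flow, so $d_\theta F_a^*\eta=0$ yields $\partial_t\eta_1 = d_\theta^X\eta_2$ and one can integrate---to give $\pi(\eta)-\eta=d_\theta\beta$ for any $d_\theta$-closed $\eta$ and some globally defined $\beta$. Hence $[\Omega]_\theta=[\pi(\Omega)]_\theta$. Since $\phi$ is itself $\TT$-invariant, this yields $\pi(\Omega)=e^\phi\pi(\Omega_K)=(v_1\cdots v_s)^{1/t}\Omega_K'$, where $\Omega_K':=\pi(\Omega_K)$ is a $\TT$-invariant \K\ form on $\hat X$.

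Splitting $\Omega_K'=\Omega_0+\Omega_{01}+\Omega_1$ as in the proof of Proposition~\ref{unic}, the same analysis gives $\Omega_0=\sum_{i,j=1}^t f_{ij}dz_i\wedge d\bar z_j$ with constants $f_{ij}\in\CC$. The $U$-automorphy together with the LCK relation $e^{-\tau_\theta(u)}=r(u)^2=|\sigma_{s+j}(u)|^2$ forces $f_{ij}\sigma_{s+i}(u)\overline{\sigma_{s+j}(u)}=f_{ij}r(u)^2$; for $i\neq j$, a nonzero $f_{ij}$ would require $\sigma_{s+i}=\sigma_{s+j}$ on $U$, which by the simplicity argument at the end of the proof of Proposition~\ref{twistedLCK} extends to all of $K$, contradicting $i\neq j$. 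Thus $\Omega_0=\sum_j f_{jj}dz_j\wedge d\bar z_j$, and positivity of $\Omega_K'$ (a principal submatrix of a positive Hermitian matrix being positive Hermitian) forces $f_{jj}=ia_j$ with $a_j>0$. To conclude that the $\Omega_{01}$ and $\Omega_1$ components contribute nothing to $[\pi(\Omega)]_\theta$, I would invoke the Leray-Serre identification from Section~5: $H^2_\theta(X)\cong E_2^{0,2}$, since $E_2^{2,0}=E_2^{1,1}=0$ (because $\rho^\theta\not\equiv 1$ and $\mathcal J_1=\emptyset$, via Lemma~\ref{isom}), and among the $(0,2)$-bidegree components of the $\TT$-invariant form $\pi(\Omega)$, only those corresponding to $\rho^\theta\sigma_I\equiv 1$ survive in $E_2^{0,2}$---precisely the diagonal $dz_j\wedge d\bar z_j$ terms. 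The main obstacle is this twisted averaging argument together with the bookkeeping needed to isolate the surviving diagonal component through the spectral sequence.

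For the Lefschetz part, $[\mathrm{Lef}_\Omega]$ is well-defined because $d_\theta(\Omega\wedge\alpha)=(d\Omega-\theta\wedge\Omega)\wedge\alpha+\Omega\wedge d\alpha=0$ for closed $\alpha$, and if $\alpha=d\beta$ then $\Omega\wedge\alpha=d_\theta(\Omega\wedge\beta)$. By Proposition~\ref{LCKbun}, a basis of $H^l(X,\CC)$ for $0\leq l\leq s$ is given by the monomials $\omega_J=d\ln v_{j_1}\wedge\cdots\wedge d\ln v_{j_l}$, while $H^l(X,\CC)=0$ for $s<l<n$; since $t\geq 1$ forces $s<m<n$, the $e_{1\cdots n}$-basis elements do not appear in the injectivity range $[0,m]$. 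Our formula for $[\Omega]_\theta$ then gives $[\mathrm{Lef}_\Omega](\omega_J)=(v_1\cdots v_s)^{1/t}\sum_{j=1}^t ia_jdz_j\wedge d\bar z_j\wedge\omega_J$, whose components in the basis of Proposition~\ref{twistedLCK} are linearly independent over distinct pairs $(J,j)$, proving injectivity for $0\leq l\leq s$; for $s<l\leq m$ injectivity is trivial. For surjectivity in $l\in[m,2m-2]$, Proposition~\ref{twistedLCK} gives $H^{l+2}_\theta(X)=0$ for all $l>s$, and since $l\geq m>s$ the target vanishes, making surjectivity automatic.
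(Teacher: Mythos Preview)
Your proposal is correct and follows essentially the same route as the paper: average $\Omega$ over $\TT$ via a $d_\theta$-version of \ref{Gpoincare}, split the resulting invariant K\"ahler form as in \ref{unic}, and read off the class using the description of $H^2_\theta(X)$ from \ref{twistedLCK}, then deduce the Lefschetz statement from the explicit bases in \ref{LCKbun} and \ref{twistedLCK}. Your treatment is in fact more detailed in two places: you prove directly that the off-diagonal coefficients $f_{ij}$ vanish (via the automorphy relation and the simplicity argument from the end of \ref{twistedLCK}), whereas the paper simply asserts that $[\omega_0]_\theta$ coincides with $\omega_0$ under the isomorphism~\eqref{pp}; and you spell out the Lefschetz injectivity/surjectivity case by case on the explicit bases, whereas the paper just invokes ``non-degeneracy of $[\Omega]$''. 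The spectral-sequence justification you give for killing the $\omega_{01}+\omega_1$ contribution (via $E_2^{2,0}=E_2^{1,1}=0$) is equivalent to the paper's argument that any nonzero class in $H^2_\theta(X)$ must be represented by a form valued in $\bigwedge^2_{\CC^t}$.
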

\begin{proof}
Let us start by noting that, as in the case of the de Rham cohomology, the twisted cohomology with respect to $\theta$ is the twisted cohomology of $\TT$-invariant forms. This is a direct consequence of \ref{teoremaB}, but can also be seen by an argument completely analogous to \ref{Gpoincare} and using the fact that $\theta$ vanishes on vector fields tangent to $\TT^n$. Hence, by averaging the form $\Omega$ to a $\TT$-invariant LCK form $\Omega'$ as in \ref{unic}, the  twisted  class does not change: $[\Omega]_\theta=[\Omega']_\theta\in H^2_\theta(X)$. 

At the same time, we saw that the corresponding \Ka\ form $\Omega'_K$ writes with respect to the splitting \eqref{splitt} as  $\Omega'_K=\Omega_0+\Omega_{01}+\Omega_1$, with $\Omega_0$ a constant positive form on $\CC^t$. Also, given the expression of $\theta$, we have $\Omega'=(v_1\cdots v_s)^{1/t}\Omega_K':=\omega_0+\omega_{01}+\omega_1$, where again $\Omega'$ was decomposed with respect to the splitting \eqref{splitt}. Clearly, $d_\theta\omega_0=0$, so also $d_\theta(\omega_{01}+\omega_1)=0$, thus we can write $[\Omega']_\theta=[\omega_0]_\theta+[\omega_{01}+\omega_1]_\theta\in H^2_\theta(X)$. Now, since by \ref{twistedLCK}, we have: 
\begin{equation}\label{pp}
H^2_\theta(X)\cong (v_1\cdots v_s)^{\frac{1}{t}}\oplus_{j=1}^t\CC dz_j\wedge\ d\cz_j,
\end{equation}
it follows that $[\omega_{01}+\omega_1]=0\in H^2_\theta(X)$. Indeed, otherwise we would have that  on $\tilde X$, $\omega_{01}+\omega_1+d_{\theta}\eta$ is valued in $\bigwedge^2_{\CC^t}$ for some one-form $\eta\in \Omega^1_X(X)$, which is impossible. Hence $[\Omega]_\theta=[\omega_0]_\theta=\omega_0$ under the isomorphism \eqref{pp}, so the first assertion follows. The second assertion follows from the description of the cohomology groups given in \ref{LCKbun} and \ref{twistedLCK} and from the non-degeneracy of $[\Omega]$.
\end{proof}

\begin{remark}
The fact that for any LCK form $\Omega$ on $X$, the operator $\mathrm{Lef}_\Omega:H^1(X,\CC)\rightarrow H^3_\theta(X)$ is injective also implies \ref{stab} via \cite[Theorem 2.4]{g}.
\end{remark}

We end this section with one more application concerning the possible real Chern classes of vector bundles on OT manifolds:
\begin{proposition}\label{Chern}
Let $X(K,U)$ be an OT manifold of type $(s,t)$  verifying that $U$ admits no trivial representations $\sigma_I$ unless $|I|\in\{0, n\}$. Then, for any $1\leq k<n/2$, every $d$-closed real $(k,k)$ form on $X$ is exact. In particular, if $E$ is a complex vector bundle on $X$, its first $[(n-1)/2]$ real Chern classes $c_k(E)^\RR\in H^{2k}(X,\RR)$ vanish.
\end{proposition}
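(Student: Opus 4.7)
The proof plan splits into two cases depending on $2k$. If $s<2k<n$, then Theorem \ref{teoremaA} combined with the hypothesis on $U$ immediately gives $H^{2k}(X,\CC)=0$, so $\alpha$ is automatically exact. The interesting case is $0<2k\leq s$, where $H^{2k}(X,\CC)$ has the basis $\{[\omega_J]\}$ indexed by $J\subset\{1,\ldots,s\}$ with $|J|=2k$, with $\omega_J=\bigwedge_{j\in J}d\ln v_j$. By Poincar\'e duality and Theorem \ref{teoremaA} applied in degree $2m-2k\geq n$, the dual basis of $H^{2m-2k}(X,\CC)$ is $\{[\omega_{J^c}\wedge e_{(1,\ldots,n)}]\}$ with $J^c=\{1,\ldots,s\}\setminus J$, so it suffices to show that $\int_X\alpha\wedge\omega_{J^c}\wedge e_{(1,\ldots,n)}=0$ for each such $J$.

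I would first average $\alpha$ over the $\TT$-action to assume $\alpha$ is $\TT$-invariant; this is justified by \ref{Gpoincare} and preserves reality, closedness and Hodge type since $\TT$ acts holomorphically. Writing the resulting $\TT$-invariant form in the Hodge frame as $\sum a_{ABCD}(v)\,dw_A\wedge d\bar w_B\wedge dz_C\wedge d\bar z_D$ with $|A|+|C|=|B|+|D|=k$, a direct expansion shows that only the ``purely-$w$'' terms ($C=D=\emptyset$) contribute to the integrand, since the rest carry at least one $dz$ or $d\bar z$ that is already present in $e_{(1,\ldots,n)}$. The pairing then reduces, up to nonzero constants, to the integral over $X$ of the $dv_J$-coefficient $f_J:=\sum_{A\subset J,\,|A|=k}\epsilon_A\,a_{A,J\setminus A}$, with $\epsilon_A$ the appropriate shuffle sign. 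The problem becomes showing $f_J\equiv 0$.

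For this, closedness of $\alpha$ yields the cocycle-type relations $\sum_{j\in A'}\epsilon^{A'}_j\,\partial_{v_j}a_{A'\setminus\{j\},B'}=0$ for every $|A'|=k+1$, $|B'|=k$ (and the analogue with the roles of $A$ and $B$ interchanged). Combined with the antisymmetrization built into $f_J$, I expect these to force $\partial_{v_l}f_J=0$ for every $l$, so $f_J$ is constant on $X$; the case $k=1$ is illustrative, since the tensor $T_{ljk}:=\partial_{v_l}a_{jk}$ then becomes totally symmetric in $l,j,k$ while $f_J=a_{jk}-a_{kj}$ is antisymmetric in $(j,k)$, giving $\partial_{v_l}f_J=T_{ljk}-T_{lkj}=0$. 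Once $f_J$ is constant, $U$-equivariance of $\alpha$ gives $u^*f_J=\sigma_J(u)^{-1}f_J$, and the hypothesis $\sigma_J\not\equiv 1$ on $U$ (which holds since $0<|J|=2k<n$) forces $f_J=0$.

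The main obstacle I anticipate is the general-$k$ verification of $\partial_{v_l}f_J=0$: the cases $l\in J$ and $l\notin J$ must be treated separately (the former via relations with $l\in A'$ inside $J$, the latter via relations in the ``off-$J$'' sector), and carrying through the shuffle signs $\epsilon_A,\epsilon^{A'}_j$ is delicate. For the Chern-class corollary, real Chern classes of any complex vector bundle on a complex manifold admit closed real $(k,k)$-form representatives (via Chern--Weil applied to a Hermitian connection compatible with a holomorphic structure on $E$, or equivalently via the Bott--Chern class of $E$), so the main vanishing statement applies directly for $1\leq k\leq\lfloor(n-1)/2\rfloor$.
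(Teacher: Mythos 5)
Your route differs substantially from the paper's and, more importantly, it has a genuine gap that you yourself flag: the claim that $\partial_{v_l}f_J=0$ for general $k$ (where $f_J$ is the $dv_J$-coefficient of the averaged form) is not established. You verify it only for $k=1$, and the general case requires a careful induction over the antisymmetrization combinatorics that you have not carried out. Without that step, the pairing argument does not close, because $f_J$ constant is exactly the input needed to invoke $U$-equivariance and $\sigma_J\not\equiv 1$. Note also that the heart of the matter is precisely the place where the Hodge bidegree condition $|A|=|B|=k$ must be used: as you can check on $\omega_J=\bigwedge_{j\in J}d\ln v_j$ itself (which is closed, $\TT$-invariant, $U$-invariant, but not of pure type $(k,k)$), its $dv_J$-coefficient is $\prod_{j\in J}v_j^{-1}$, which is far from constant, so the $(k,k)$-hypothesis cannot be dropped and must genuinely drive the vanishing.

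The paper's proof avoids all of this by exploiting the Hodge type directly on the cohomology representative rather than through Poincar\'e duality. By Corollary \ref{trivialI} one writes $\al=\sum_{I}a_I f_I+d\be$ with $a_I\in\RR$ and $f_I=\bigwedge_{j\in I}d\ln v_j$ for $I\subset\{1,\ldots,s\}$, $|I|=2k$. Since $\al$ has bidegree $(k,k)$ with $k\geq 1$, its $(2k,0)$-component vanishes. The $(2k,0)$-component of $f_I$ is $\phi_I=\bigwedge_{j\in I}\phi_j$ with $\phi_j=f_j^{1,0}=-\tfrac{i}{2}v_j^{-1}dw_j$, and the $(2k,0)$-component of $d\be$ is $\del\be^{2k-1,0}$. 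Hence $\sum_I a_I\phi_I=-\del\be^{2k-1,0}$. Choosing a Hermitian metric for which $\del^\star\phi_I=0$, the left-hand side is $L^2$-orthogonal to $\Im\del$, forcing $\sum_I a_I\phi_I=0$ and then $a_I=0$ for all $I$, so $\al=d\be$. This is shorter and sidesteps all shuffle-sign bookkeeping: the single observation that the $\phi_I$ are $\partial$-harmonic (for a suitable metric) and linearly independent does the work that your proposed coefficient computation is trying to do.

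Your discussion of the Chern class corollary is fine. If you want to salvage your pairing approach, the cleanest fix is in fact to replace the unverified coefficient computation by the paper's $(2k,0)$-component argument applied to the averaged representative; the two are morally the same use of the $(k,k)$-hypothesis, but the paper's version is the one that closes.
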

\begin{proof}
By \ref{trivialI}, we deduce that:  
\begin{align*}
H^{2k}(X,\RR)&\cong\textstyle\bigwedge^{2k}\RR\{f_1,\ldots, f_s\} \  \ \ \ \ \ \text{ for }2k<n
\end{align*}
where $f_l:=v_l^{-1}dv_l$ for $1 \leq l \leq s$. 
Let us also denote by $\phi_l=-\frac{i}{2}v_l^{-1}d w_l=f_l^{1,0}$ for $1 \leq l \leq s$, so that $f_l=\phi_l+\ov{\phi}_l$.

Let $\al$ be a real closed $(k,k)$ form on $X$. By the above, we can write: $\al=\sum_{I\in\mathcal{I}_{2k}}a_If_I+d\be$, where for every multi-index $I=(i_1<\ldots<i_{2k})$, $f_I=f_{i_1}\wedge\ldots\wedge f_{i_{2k}}$, $a_I\in\RR$ and $\be\in\Omega^{2k-1}_X(X)$ is a real form. In particular, in bidegree $(2k,0)$, this reads: 
\begin{equation*}
\al^{2k,0}=0=\sum_{I\in\mathcal I_{2k}}a_I\phi_I+\del\be^{2k-1,0}.
\end{equation*} 
But, for any $I$, $\phi_I$ is not $\del$-exact, and neither is the sum $\sum_I a_I\phi_I$, unless it is zero. In order to see this, one could for instance choose a hermitian metric on $X$ defining an $L^2$ adjoint operator $\del^\star$ with respect to which one would have $\del^\star\phi_I=0$ for any $I$. It would follow then that each $\al_I$ is $L^2$-orthogonal to $\Im\del$, and so $\sum_I a_I\phi_I=\del\be^{2k-1,0}=0$. In particular, this implies that $a_I=0$ for each $I\in\mathcal{I}_{2k}$, and so $\al=d\be$.
\end{proof}
\begin{remark}
In the literature specialized on topology, there is a complex called Morse-Novikov, associated to a closed one-form $\theta$ of Morse type, i.e. locally given by the differential of a Morse function. It was first considered by Novikov in \cite{n1} and \cite{n2}, and for a thorough description we refer to \cite{farber}. The construction of this complex is based on the number of zeros of $\theta$, just as the Morse-Smale complex of a Morse function $f$ is based on the number of zeros of $f$ and actually these two complexes coincide when $\theta=df$. If $\theta$ is a nowhere vanishing one-form, as the Lee form   in \ref{twistedLCK} is, the Morse-Novikov complex is trivial, therefore its cohomology vanishes. However, the twisted cohomology does not vanish, as our computation indicates; consequently, OT manifolds provide examples in all dimensions of spaces for which these two cohomologies differ.
\end{remark}

\noindent{\bf Acknowledgements:} We are very grateful to Victor Vuletescu for many insightful discussions and to Andrei Moroianu, Mihaela Pilca and Massimiliano Pontecorvo for a careful reading of a first draft of the paper and useful suggestions and remarks.

\end{document}